\newtheorem{theorem}{Theorem}
\newtheorem{corollary}[theorem]{Corollary}
\newtheorem{definition}[theorem]{Definition}
\newtheorem{lemma}[theorem]{Lemma}
\newtheorem{remark}[theorem]{Remark}
\newenvironment{proof}[1][Proof]{\noindent\textbf{#1.} }{\ \rule{0.5em}{0.5em}}
\begin{document}

\title{Sums involving the digamma function connected to the incomplete beta
function and the Bessel functions}
\author{Juan L. Gonz\'{a}lez-Santander, Fernando S\'{a}nchez Lasheras \\
Department of Mathematics, Universidad de Oviedo,\\
33007 Oviedo, Spain.}
\maketitle

\begin{abstract}
We calculate some infinite sums containing the digamma function in
closed-form. These sums are related either to the incomplete beta function
or to the Bessel functions. The calculations yield interesting new results
as by-products, such as parameter differentiation formulas for the beta
incomplete function, reduction formulas of $_{3}F_{2}$ hypergeometric
functions, or a definite integral which does not seem to be tabulated in the
most common literature. As an application of some sums involving the digamma
function, we have calculated some redution formulas for the parameter
differentiation of the Mittag-Leffler function and the Wright function.
\end{abstract}

\section{Introduction}

In the literature \cite{Hansen}, \cite{Brychov}, we found some compilations
of series and finite sums involving the digamma function. Some authors have
contributed to these compilations, such as Doelder \cite{Doelder}, Miller
\cite{Miller}, and Cvijovi\'{c} \cite{Cvijovic}. More recently, the authors
have published some novel results in this regard \cite{SumsPsiJL}.

Sums involving the digamma function occur in the expressions of the
derivatives of the Mittag-Leffler function and the Wright function with
respect to parameters \cite{Apelblat1,Apelblat2}. Also, they occur in the
derivation of asymptotic expansions for Mellin-Barnes integrals \cite{Paris}.

The aim of this paper is the derivation of some new sums involving the
digamma function by using the derivative of the Pochhammer symbol to known
reduction formulas of the generalized hypergeometric function. As a
consitency test, for many particular values of the results obtained, we
recover expressions given in the literature. In adittion, we have developed
a MATHEMATICA\ program to numerically check all the new expressions derived
in the paper. This program is available at
\url{https://bit.ly/3LG2gej}%
.

This paper is organized as follows. In Section \ref{Section: Preliminaries},
we present some basic properties of the Pochhammer symbol, the beta and the
digamma functions, as well as the definitions of the generalized
hypergeometric function and the Meijer-$G$ function. In Section \ref%
{Section: Incomplete Beta}, we derive some sums connected to the parameter
differentiation of the incomplete beta function. In Section \ref{Section:
Bessel}, we calculate in a similar way some other sums connected to the
order derivatives of the Bessel and the modified Bessel functions. Section %
\ref{Section: Application}\ is devoted to the application of some sums
involving the digamma function to the calculation of reduction formulas of
the parameter differentiation of the Wright and Mittag-Leffler functions.
Finally, we collect our conclusions in Section \ref{Section: Conclusions}.

\section{Preliminaries\label{Section: Preliminaries}}

The Pochhamer symbol is defined as \cite[Eqn. 18:12:1]{Atlas}%
\begin{equation}
\left( x\right) _{n}=\frac{\Gamma \left( x+n\right) }{\Gamma \left( x\right)
},  \label{Pochhammer_def}
\end{equation}%
where $\Gamma \left( x\right) $ denotes the gamma function with the
following basic properties \cite[Ch. 43]{Atlas}:\
\begin{eqnarray}
\Gamma \left( z+1\right)  &=&z\,\Gamma \left( z\right) ,
\label{Gamma_factorial} \\
2^{2z-1}\Gamma \left( z\right) \Gamma \left( z+\frac{1}{2}\right)  &=&\sqrt{%
\pi }\,\Gamma \left( 2z\right) .  \label{Gamma_duplication}
\end{eqnarray}

Also, the beta function, defined as \cite[Eqn. 1.5.3]{Lebedev}%
\begin{eqnarray*}
&&\mathrm{B}\left( x,y\right) =\int_{0}^{1}t^{x-1}\left( 1-t\right) ^{y-1}dt,
\\
&&\mathrm{Re}\,x,\,\mathrm{Re}\,y>0,
\end{eqnarray*}%
satisfies the property \cite[Eqn. 1.5.5]{Lebedev}
\begin{equation}
\mathrm{B}\left( x,y\right) =\frac{\Gamma \left( x\right) \Gamma \left(
y\right) }{\Gamma \left( x+y\right) }.  \label{Beta_property}
\end{equation}

Further, the incomplete beta function is defined as \cite[Eqn. 8.17.1]{NIST}:%
\begin{equation}
\mathrm{B}_{z}\left( x,y\right) =\int_{0}^{z}t^{x-1}\left( 1-t\right)
^{y-1}dt,  \label{Beta_incomplete_def}
\end{equation}%
which satisfies the property\ \cite[Eqn. 58:5:1]{Atlas},%
\begin{equation}
\mathrm{B}_{z}\left( a,b\right) +\mathrm{B}_{1-z}\left( b,a\right) =\mathrm{B%
}\left( a,b\right) .  \label{Beta_reflection}
\end{equation}

A function related to the incomplete beta function is the Lerch function,
defined as
\begin{equation}
\Phi \left( z,a,b\right) =\sum_{k=0}^{\infty }\frac{z^{k}}{\left( k+b\right)
^{a}}.  \label{Lerch_def}
\end{equation}

According to (\ref{Pochhammer_def}), we have%
\begin{equation}
\frac{d}{dx}\left[ \left( x\right) _{n}\right] =\left( x\right) _{n}\left[
\psi \left( x+n\right) -\psi \left( x\right) \right] ,  \label{D[(x)_n]}
\end{equation}%
and
\begin{equation}
\frac{d}{dx}\left[ \frac{1}{\left( x\right) _{n}}\right] =\frac{1}{\left(
x\right) _{n}}\left[ \psi \left( x\right) -\psi \left( x+n\right) \right] ,
\label{D[1/(x)_n]}
\end{equation}%
where $\psi \left( x\right) $ denotes the digamma function \cite[Ch. 44]%
{Atlas}%
\begin{equation*}
\psi \left( x\right) =\frac{\Gamma ^{\prime }\left( x\right) }{\Gamma \left(
x\right) },
\end{equation*}%
with the following properties \cite[Eqns. 1.3.3-4\&8]{Lebedev}
\begin{eqnarray}
\psi \left( \frac{1}{2}\right)  &=&-\gamma -2\ln 2,  \label{psi(1/2)} \\
\psi \left( z+1\right)  &=&\frac{1}{z}+\psi \left( z\right) ,
\label{psi(1+z)} \\
\psi \left( 1-z\right) -\psi \left( z\right)  &=&\pi \cot \left( \pi
z\right) .  \label{psi(1-z)-psi(z)}
\end{eqnarray}

Finally, $_{p}F_{q}\left( z\right) $ denotes the generalized hypergeometric
function, usually defined by means of the hypergeometric series \cite[Sect.
16.2]{NIST}:%
\begin{equation}
_{p}F_{q}\left( \left.
\begin{array}{c}
a_{1},\ldots ,a_{p} \\
b_{1},\ldots b_{q}%
\end{array}%
\right\vert z\right) =\sum_{k=0}^{\infty }\frac{\left( a_{1}\right)
_{k}\cdots \left( a_{p}\right) _{k}}{\left( b_{1}\right) _{k}\cdots \left(
b_{q}\right) _{k}}\frac{z^{k}}{k!},  \label{pFq_def}
\end{equation}%
whenever this series converge and elsewhere by analytic continuation.

In addition, the Meijer-$G$ function is defined via Mellin-Barnes integral
representation \cite[Eqn. 16.17.1]{NIST}:%
\begin{eqnarray*}
&&G_{p,q}^{m,n}\left( z\left\vert
\begin{array}{c}
a_{1},\ldots ,a_{p} \\
b_{1},\ldots b_{q}%
\end{array}%
\right. \right) \\
&=&\frac{1}{2\pi i}\int_{L}\frac{\prod_{\ell =1}^{m}\Gamma \left( b_{\ell
}-s\right) \prod_{\ell =1}^{m}\Gamma \left( 1-a_{\ell }+s\right) }{%
\prod_{\ell =m}^{q-1}\Gamma \left( 1-b_{\ell +1}+s\right) \prod_{\ell
=n}^{p-1}\Gamma \left( a_{\ell +1}-s\right) }\,z^{s}ds,
\end{eqnarray*}%
where the integration path $L$ separates the poles of the factors $\Gamma
\left( b_{\ell }-s\right) $ from those of the factors $\Gamma \left(
1-a_{\ell }+s\right) $.

\section{Sums connected to the incomplete beta function\label{Section:
Incomplete Beta}}

\subsection{Derivatives of the incomplete beta function with respect to the
parameters}

\begin{theorem}
The following parameter derivative holds true:%
\begin{equation}
\frac{\partial }{\partial a}\mathrm{B}_{z}\left( a,b\right) =\ln z\,\mathrm{B%
}_{z}\left( a,b\right) -\frac{z^{a}}{a^{2}}\,_{3}F_{2}\left( \left.
\begin{array}{c}
1-b,a,a \\
a+1,a+1%
\end{array}%
\right\vert z\right) .  \label{DaB_resultado}
\end{equation}
\end{theorem}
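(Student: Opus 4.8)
The plan is to differentiate the integral representation (\ref{Beta_incomplete_def}) under the integral sign and then expand the factor $\left(1-t\right)^{b-1}$ in a binomial series. Since $\partial_a\!\left[t^{a-1}\right]=t^{a-1}\ln t$ and the integrand is dominated on every compact subinterval of $(0,z)$ by an integrable function (the logarithmic singularity at $t=0$ being harmless for $\mathrm{Re}\,a>0$), differentiation under the integral sign is legitimate and yields
\[
\frac{\partial }{\partial a}\mathrm{B}_{z}\left( a,b\right)=\int_{0}^{z}t^{a-1}\left(\ln t\right)\left(1-t\right)^{b-1}dt .
\]
First I would insert the binomial expansion $\left(1-t\right)^{b-1}=\sum_{k=0}^{\infty}\frac{\left(1-b\right)_{k}}{k!}\,t^{k}$, valid for $0\le t<1$, and interchange summation and integration by absolute convergence (the endpoint $z=1$ being recovered by continuity when $\mathrm{Re}\,b>0$, or by analytic continuation in $b$ otherwise).

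Next I would evaluate the elementary moment integral
\[
\int_{0}^{z}t^{a+k-1}\ln t\,dt=\frac{z^{a+k}\ln z}{a+k}-\frac{z^{a+k}}{\left(a+k\right)^{2}},
\]
obtained either by parts or by differentiating $\int_{0}^{z}t^{s-1}dt=z^{s}/s$ with respect to $s$. Summing over $k$ splits the derivative into two pieces. The piece carrying $\ln z$ is
\[
\ln z\;z^{a}\sum_{k=0}^{\infty}\frac{\left(1-b\right)_{k}}{a+k}\frac{z^{k}}{k!}=\ln z\,\mathrm{B}_{z}\left( a,b\right),
\]
the last equality being just the series representation of the incomplete beta function --- itself a consequence of the same binomial expansion, equivalently $\mathrm{B}_{z}\left(a,b\right)=\frac{z^{a}}{a}\,{}_{2}F_{1}\!\left(a,1-b;a+1;z\right)$ together with $\left(a\right)_{k}/\left(a+1\right)_{k}=a/\left(a+k\right)$. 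This reproduces the first term of (\ref{DaB_resultado}).

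It remains to identify the second piece, $-z^{a}\sum_{k=0}^{\infty}\frac{\left(1-b\right)_{k}}{\left(a+k\right)^{2}}\frac{z^{k}}{k!}$, with the claimed ${}_{3}F_{2}$, and this is the one genuine step; the rest is routine. The trick is to write $\frac{1}{\left(a+k\right)^{2}}=\frac{1}{a^{2}}\left(\frac{\left(a\right)_{k}}{\left(a+1\right)_{k}}\right)^{2}$, which follows from $\left(a+1\right)_{k}=\left(a\right)_{k}\left(a+k\right)/a$. Substituting and comparing with the defining series (\ref{pFq_def}) turns the sum into
\[
\frac{z^{a}}{a^{2}}\sum_{k=0}^{\infty}\frac{\left(1-b\right)_{k}\left(a\right)_{k}\left(a\right)_{k}}{\left(a+1\right)_{k}\left(a+1\right)_{k}}\frac{z^{k}}{k!}=\frac{z^{a}}{a^{2}}\,{}_{3}F_{2}\!\left(\left.\begin{array}{c}1-b,a,a\\a+1,a+1\end{array}\right\vert z\right),
\]
which is exactly the second term of (\ref{DaB_resultado}). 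The main obstacle is not analytic but bookkeeping: keeping the two factors $\left(a\right)_{k}/\left(a+1\right)_{k}$ straight so that the prefactors $z^{a}$ and $a^{-2}$ come out precisely as stated; the convergence and interchange-of-limit justifications are standard and need only a remark about the admissible range of $z$ (and of $\mathrm{Re}\,b$ if one insists on $z=1$).
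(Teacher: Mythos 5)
Your proposal is correct and follows essentially the same route as the paper: differentiate (\ref{Beta_incomplete_def}) under the integral sign, expand $(1-t)^{b-1}$ by the binomial series, integrate term by term with $\int_0^z t^{a+k-1}\ln t\,dt$, and use $(a)_k/(a+1)_k=a/(a+k)$ to identify the two resulting series as $\ln z\,\mathrm{B}_z(a,b)$ and the stated $_{3}F_{2}$. The only cosmetic difference is that you recognize the $\ln z$ series directly as the series of $\mathrm{B}_z(a,b)$, whereas the paper first writes it as a $_{2}F_{1}$ and then invokes the tabulated reduction formula (\ref{2F1_reduction_beta_2}); these are the same step.
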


\begin{proof}
According to the definition of the incomplete beta function (\ref%
{Beta_incomplete_def}), we have%
\begin{equation}
\frac{\partial }{\partial a}\mathrm{B}_{z}\left( a,b\right)
=\int_{0}^{z}t^{a-1}\left( 1-t\right) ^{b-1}\ln t\,dt.  \label{DaB_int}
\end{equation}%
Now, apply the formulas \cite[Eqn. 18:3:4]{Atlas},%
\begin{equation}
\frac{1}{\left( 1-t\right) ^{\nu }}=\sum_{k=0}^{\infty }\left( \nu \right)
_{k}\frac{t^{k}}{k!},  \label{Binomial_theorem}
\end{equation}%
and \cite[Eqn. 1.6.1(18)]{Prudnikov1}
\begin{equation}
\int x^{p}\ln x\,dx=x^{p+1}\left[ \frac{\ln x}{p+1}-\frac{1}{\left(
p+1\right) ^{2}}\right] ,  \label{Int_xp_lnx}
\end{equation}%
in order to rewrite (\ref{DaB_int}) as
\begin{equation}
\frac{\partial }{\partial a}\mathrm{B}_{z}\left( a,b\right) =z^{a}\left\{
\ln z\sum_{k=0}^{\infty }\frac{\left( 1-b\right) _{k}\,z^{z}}{k!\left(
a+k\right) }-\sum_{k=0}^{\infty }\frac{\left( 1-b\right) _{k}}{k!\left(
a+k\right) ^{2}}\right\} .  \label{DaBz_sums}
\end{equation}%
Taking into account the property%
\begin{equation*}
\frac{1}{\alpha +k}=\frac{\left( \alpha \right) _{k}}{\alpha \left( \alpha
+1\right) _{k}},
\end{equation*}%
and the definition of the generalized hypergeometric function (\ref{pFq_def}%
), we may recast the the sums given in (\ref{DaBz_sums})\ as%
\begin{equation}
\frac{\partial }{\partial a}\mathrm{B}_{z}\left( a,b\right) =z^{a}\left\{
\frac{\ln z}{a}\,_{2}F_{1}\left( \left.
\begin{array}{c}
1-b,a \\
a+1%
\end{array}%
\right\vert z\right) -\frac{1}{a^{2}}\,_{3}F_{2}\left( \left.
\begin{array}{c}
1-b,a,a \\
a+1,a+1%
\end{array}%
\right\vert z\right) \right\} .  \label{DaB_hyper}
\end{equation}%
Finally, apply to (\ref{DaB_hyper})\ the reduction formula \cite[Eqn.
7.3.1(28)]{Prudnikov3}%
\begin{equation}
_{2}F_{1}\left( \left.
\begin{array}{c}
\alpha ,\beta \\
\beta +1%
\end{array}%
\right\vert z\right) =\beta \,z^{-\beta }\,\mathrm{B}_{z}\left( \beta
,1-\alpha \right) ,  \label{2F1_reduction_beta_2}
\end{equation}%
in order to arrive at (\ref{DaB_resultado}), as we wanted to prove.
\end{proof}

As a consequence of the last theorem, we calculate the next integral, which
does not seem to be tabulated in the most common literature.

\begin{theorem}
For $\mathrm{Re}\,\alpha >-1$ and $\mathrm{Re}\,z>0$, the following integral
holds true:\
\begin{equation}
\int_{0}^{z}\frac{u^{\alpha }}{1-u^{2}}\ln u\,du=\frac{1}{2}\ln z\,\mathrm{B}%
_{z^{2}}\left( \frac{1+\alpha }{2},0\right) -\frac{z^{\alpha +1}}{4}\,\Phi
\left( z^{2},2,\frac{1+\alpha }{2}\right) .  \label{Integral_Lerch_resultado}
\end{equation}
\end{theorem}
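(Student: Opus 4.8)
The plan is to reduce the integral on the left-hand side of \eqref{Integral_Lerch_resultado} to the parameter derivative of an incomplete beta function and then invoke the previous theorem, namely \eqref{DaB_resultado}. First I would substitute $u = \sqrt{t}$ (equivalently $t = u^{2}$, $du = \tfrac{1}{2}t^{-1/2}dt$), which turns $\int_{0}^{z} u^{\alpha}(1-u^{2})^{-1}\ln u\,du$ into $\tfrac{1}{4}\int_{0}^{z^{2}} t^{(\alpha-1)/2}(1-t)^{-1}\ln t\,dt$, since $\ln u = \tfrac12\ln t$. Comparing with \eqref{DaB_int}, this is exactly $\tfrac{1}{4}\,\dfrac{\partial}{\partial a}\mathrm{B}_{z^{2}}(a,b)$ evaluated at $a = \tfrac{1+\alpha}{2}$ and $b = 0$, so the convergence condition $\mathrm{Re}\,a>0$ becomes $\mathrm{Re}\,\alpha > -1$, matching the hypothesis.

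Next I would apply \eqref{DaB_resultado} with these parameter values. The first term gives $\ln(z^{2})\,\mathrm{B}_{z^{2}}\!\left(\tfrac{1+\alpha}{2},0\right) = 2\ln z\,\mathrm{B}_{z^{2}}\!\left(\tfrac{1+\alpha}{2},0\right)$, and after multiplying by the overall factor $\tfrac14$ this yields the first term on the right-hand side of \eqref{Integral_Lerch_resultado}. For the second term I need to simplify the $_{3}F_{2}$ appearing in \eqref{DaB_resultado}. With $b=0$ we have $1-b=1$, and the Pochhammer symbol $(1)_{k}=k!$ cancels the $k!$ in the hypergeometric series \eqref{pFq_def}; moreover two of the remaining numerator and denominator parameters coincide with $a$ and $a+1$, so
\[
{}_{3}F_{2}\!\left(\left.\begin{array}{c}1,a,a\\a+1,a+1\end{array}\right|w\right)
=\sum_{k=0}^{\infty}\frac{(a)_{k}^{2}}{(a+1)_{k}^{2}}\,w^{k}
=\sum_{k=0}^{\infty}\frac{a^{2}}{(a+k)^{2}}\,w^{k},
\]
using $(a)_{k}/(a+1)_{k} = a/(a+k)$. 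Therefore $\dfrac{w^{a}}{a^{2}}\,{}_{3}F_{2}(\cdots\mid w) = w^{a}\sum_{k\ge 0}(a+k)^{-2}w^{k}$, which after writing $w = z^{2}$, $a = \tfrac{1+\alpha}{2}$, and factoring $w^{a}=z^{1+\alpha}$ is precisely $z^{\alpha+1}\,\Phi\!\left(z^{2},2,\tfrac{1+\alpha}{2}\right)$ by the definition \eqref{Lerch_def} of the Lerch function. Multiplying by the overall $\tfrac14$ produces the second term of \eqref{Integral_Lerch_resultado}.

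The routine parts are the change of variables and the termwise algebraic simplification of the $_{3}F_{2}$ into a Lerch transcendent; the only point needing a little care is justifying that the $b=0$ specialization is legitimate, since $\mathrm{B}_{z}(a,b)$ with $b=0$ is a limiting (divergent-looking) case of \eqref{Beta_incomplete_def}. I would handle this by working directly with the integral representation \eqref{DaB_int} at $b=0$, which converges for $\mathrm{Re}\,a>0$, rather than by setting $b=0$ in the closed forms after the fact; equivalently one can take the limit $b\to 0$ in \eqref{DaB_hyper}, where $_{2}F_{1}(1-b,a;a+1;z)\to {}_{2}F_{1}(1,a;a+1;z) = a z^{-a}\mathrm{B}_{z}(a,0)$ by \eqref{2F1_reduction_beta_2}, so $\mathrm{B}_{z}(a,0)$ is well defined as this convergent integral and no divergence actually arises. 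With that observation the argument goes through verbatim.
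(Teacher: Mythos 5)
Your proof is correct, and it follows the same underlying strategy as the paper—identify the integral with the parameter derivative $\partial_a\mathrm{B}_{z^2}(a,0)$ and then apply \eqref{DaB_resultado} together with the identification ${}_{3}F_{2}(1,a,a;a+1,a+1;w)=a^{2}\,\Phi(w,2,a)$—but you reach both ingredients by shorter means. The paper first quotes the Atlas formula $\mathrm{B}_{\tanh^{2}x}(\lambda,0)=2\int_{0}^{x}\tanh^{2\lambda-1}t\,dt$ to get $\mathrm{B}_{z^{2}}(\lambda,0)=2\int_{0}^{z}u^{2\lambda-1}(1-u^{2})^{-1}du$ and then differentiates this identity with respect to $\lambda$ on both sides, whereas you simply substitute $u=\sqrt{t}$ in the target integral and read it off from \eqref{DaB_int} as $\tfrac14\,\partial_a\mathrm{B}_{z^{2}}(a,0)$ at $a=\tfrac{1+\alpha}{2}$; these are equivalent, but your route avoids the external tanh representation. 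More substantively, for the ${}_{3}F_{2}$-to-Lerch step the paper combines the Prudnikov reduction ${}_{3}F_{2}(a,b,c;a+1,b+1;x)$ with ${}_{2}F_{1}(1,b;b+1;x)=b\,\Phi(x,1,b)$ and a limit $b\to a$, while you observe directly from the series \eqref{pFq_def} that $(1)_k$ cancels $k!$ and $(a)_k/(a+1)_k=a/(a+k)$, so the reduction is immediate and no limiting procedure or table lookup is needed. Your closing remark on the legitimacy of the $b=0$ specialization (working with the convergent integral representation for $\mathrm{Re}\,a>0$, i.e. $\mathrm{Re}\,\alpha>-1$) addresses a point the paper glosses over, so the argument is complete as written.
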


\begin{proof}
According to \cite[Eqn. 58:14:7]{Atlas}, we have%
\begin{equation*}
\mathrm{B}_{\tanh ^{2}\left( x\right) }\left( \lambda ,0\right)
=2\int_{0}^{x}\tanh ^{2\lambda -1}t\,dt.
\end{equation*}%
Performing the substitutions $z=\tanh x$ and $u=\tanh t$, we obtain,%
\begin{equation}
\mathrm{B}_{z^{2}}\left( \lambda ,0\right) =2\int_{0}^{z}\frac{u^{2\lambda
-1}}{1-u^{2}}\,du.  \label{Beta(z^2,lambda,0)}
\end{equation}%
On the one hand, calculate the derivative of the LHS\ of (\ref%
{Beta(z^2,lambda,0)})\ with respect to the parameter $\lambda $ taking into
account (\ref{DaB_resultado}),%
\begin{equation}
\frac{\partial }{\partial \lambda }\mathrm{B}_{z^{2}}\left( \lambda
,0\right) =2\ln z\,\mathrm{B}_{z^{2}}\left( \lambda ,0\right) -\frac{%
z^{2\lambda }}{\lambda ^{2}}\,_{3}F_{2}\left( \left.
\begin{array}{c}
1,\lambda ,\lambda  \\
\lambda +1,\lambda +1%
\end{array}%
\right\vert z^{2}\right) .  \label{D[Beta(z^2,lambda,0)]}
\end{equation}%
In order to calculate the $_{3}F_{2}$ function given in (\ref%
{D[Beta(z^2,lambda,0)]}), we apply the reduction formula \cite[Eqn. 7.4.1(5)]%
{Prudnikov3}%
\begin{equation*}
_{3}F_{2}\left( \left.
\begin{array}{c}
a,b,c \\
a+1,b+1%
\end{array}%
\right\vert x\right) =\frac{1}{b-a}\left[ b\,_{2}F_{1}\left( \left.
\begin{array}{c}
a,c \\
a+1%
\end{array}%
\right\vert x\right) -a\,_{2}F_{1}\left( \left.
\begin{array}{c}
b,c \\
b+1%
\end{array}%
\right\vert x\right) \right] ,
\end{equation*}%
thus, taking $c=1$, and applying the reduction formula \cite[Eqn. 7.3.1(122)]%
{Prudnikov3}%
\begin{equation*}
_{2}F_{1}\left( \left.
\begin{array}{c}
1,b \\
b+1%
\end{array}%
\right\vert x\right) =b\,\,\Phi \left( x,1,b\right) ,
\end{equation*}%
as well as the definition of the Lerch function (\ref{Lerch_def}), we have%
\begin{eqnarray*}
_{3}F_{2}\left( \left.
\begin{array}{c}
1,a,b \\
a+1,b+1%
\end{array}%
\right\vert x\right)  &=&\frac{ab}{b-a}\left[ \Phi \left( x,1,a\right) -\Phi
\left( x,1,b\right) \right]  \\
&=&\frac{ab}{b-a}\sum_{k=0}^{\infty }x^{k}\left( \frac{1}{k+a}-\frac{1}{k+b}%
\right)  \\
&=&ab\sum_{k=0}^{\infty }\frac{x^{k}}{\left( k+a\right) \left( k+b\right) }.
\end{eqnarray*}%
Therefore, taking $a=\lambda $, $b=\lambda +\epsilon $, we have%
\begin{eqnarray}
_{3}F_{2}\left( \left.
\begin{array}{c}
1,\lambda ,\lambda  \\
\lambda +1,\lambda +1%
\end{array}%
\right\vert z^{2}\right)  &=&\lim_{\epsilon \rightarrow 0}\lambda \left(
\lambda +\epsilon \right) \sum_{k=0}^{\infty }\frac{z^{2k}}{\left( k+\lambda
\right) \left( k+\lambda +\epsilon \right) }  \notag \\
&=&\lambda ^{2}\,\Phi \left( z^{2},2,\lambda \right) .
\label{3F2_Lerch_resultado}
\end{eqnarray}%
Insert (\ref{3F2_Lerch_resultado})\ in (\ref{D[Beta(z^2,lambda,0)]})\ to
obtain%
\begin{equation}
\frac{\partial }{\partial \lambda }\mathrm{B}_{z^{2}}\left( \lambda
,0\right) =2\ln z\,\mathrm{B}_{z^{2}}\left( \lambda ,0\right) -z^{2\lambda
}\,\Phi \left( z^{2},2,\lambda \right) .  \label{LHS_beta}
\end{equation}%
On the other hand, calculate the derivative of the RHS\ of (\ref%
{Beta(z^2,lambda,0)}) as
\begin{equation}
2\frac{\partial }{\partial \lambda }\int_{0}^{z}\frac{u^{2\lambda -1}}{%
1-u^{2}}\,du=4\int_{0}^{z}\frac{u^{2\lambda -1}}{1-u^{2}}\,\ln u\,du.
\label{RHS_beta}
\end{equation}%
Finally, equate (\ref{LHS_beta})\ to (\ref{RHS_beta})\ and perform the
substitution $\alpha =2\lambda -1$ to complete the proof.
\end{proof}

\begin{lemma}
For $\,\alpha \neq 1,\,\mathrm{Re}\,\alpha <2$, the following reduction
formula holds true:%
\begin{equation}
_{3}F_{2}\left( \left.
\begin{array}{c}
\alpha ,\beta ,\beta \\
\beta +1,\beta +1%
\end{array}%
\right\vert 1\right) =\beta ^{2}\,\mathrm{B}\left( 1-\alpha ,b\right) \left[
\psi \left( 1+\beta -\alpha \right) -\psi \left( \beta \right) \right] .
\label{3F2(1)}
\end{equation}%
Take $a=\beta +\epsilon $, $b=\beta $, $c=\alpha $, and calculate the limit $%
\epsilon \rightarrow 0$ in the following reduction formula \cite[Eqn.
7.4.4(16)]{Prudnikov3},
\begin{eqnarray*}
&&_{3}F_{2}\left( \left.
\begin{array}{c}
a,b,c \\
a+1,b+1%
\end{array}%
\right\vert 1\right) =\frac{a\,b}{a-b}\Gamma \left( 1-c\right) \left\{ \frac{%
\Gamma \left( b\right) }{\Gamma \left( 1+b-c\right) }-\frac{\Gamma \left(
a\right) }{\Gamma \left( 1+a-c\right) }\right\} , \\
&&a\neq b,\,c\neq 1,\,\mathrm{Re}\,c<2,
\end{eqnarray*}%
to obtain:%
\begin{eqnarray}
&&_{3}F_{2}\left( \left.
\begin{array}{c}
\alpha ,\beta ,\beta \\
\beta +1,\beta +1%
\end{array}%
\right\vert 1\right)  \label{3F2_limit} \\
&=&\lim_{\epsilon \rightarrow 0}\beta \left( \beta +\epsilon \right)
\,\Gamma \left( 1-\alpha \right) \,\frac{\Gamma \left( \beta \right)
\,\Gamma \left( 1+\beta -\alpha +\epsilon \right) -\Gamma \left( 1+\beta
-\alpha \right) \,\Gamma \left( \beta +\epsilon \right) }{\epsilon \,\Gamma
\left( 1+\beta -\alpha \right) \,\Gamma \left( 1+\beta -\alpha +\epsilon
\right) }.  \notag
\end{eqnarray}%
Apply the Taylor series expansion
\begin{equation*}
\Gamma \left( x+\epsilon \right) =\Gamma \left( x\right) +\Gamma \left(
x\right) \,\psi \left( x\right) \,\epsilon +O\left( \epsilon ^{2}\right) ,
\end{equation*}%
to calculate (\ref{3F2_limit}). After simplification, we arrive at (\ref%
{3F2(1)}), as we wanted to prove.
\end{lemma}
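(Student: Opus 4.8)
The plan is to obtain (\ref{3F2(1)}) as a confluent (parameter-coincidence) limit of a tabulated Gauss-type evaluation of a $_{3}F_{2}$ at unity. The source is \cite[Eqn. 7.4.4(16)]{Prudnikov3},
\begin{equation*}
{}_{3}F_{2}\left( \left. \begin{array}{c} a,b,c \\ a+1,b+1 \end{array} \right\vert 1\right) =\frac{a\,b}{a-b}\,\Gamma \left( 1-c\right) \left\{ \frac{\Gamma \left( b\right) }{\Gamma \left( 1+b-c\right) }-\frac{\Gamma \left( a\right) }{\Gamma \left( 1+a-c\right) }\right\} ,\qquad a\neq b,\ c\neq 1,\ \mathrm{Re}\,c<2,
\end{equation*}
whose left side is exactly the structure of the left side of (\ref{3F2(1)}) once the two contiguous pairs are made to coincide. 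Accordingly I would set $c=\alpha $, $b=\beta $, $a=\beta +\epsilon $, so that the $_{3}F_{2}$ of (\ref{3F2(1)}) is recovered when $\epsilon \rightarrow 0$; this passage to the limit is legitimate because the target series converges for $\mathrm{Re}\,\alpha <2$ and its terms are continuous in $\epsilon $ with a majorant independent of small $\epsilon $.

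With these choices the prefactor is $ab/(a-b)=\beta \left( \beta +\epsilon \right) /\epsilon $ while the brace tends to $0$, so the right side is a $0/0$ indeterminacy. Clearing denominators inside the brace, the right side of (\ref{3F2(1)}) equals
\begin{equation*}
\lim_{\epsilon \rightarrow 0}\beta \left( \beta +\epsilon \right) \Gamma \left( 1-\alpha \right) \,\frac{\Gamma \left( \beta \right) \Gamma \left( 1+\beta -\alpha +\epsilon \right) -\Gamma \left( 1+\beta -\alpha \right) \Gamma \left( \beta +\epsilon \right) }{\epsilon \,\Gamma \left( 1+\beta -\alpha \right) \Gamma \left( 1+\beta -\alpha +\epsilon \right) }.
\end{equation*}
The key step is to expand the numerator with $\Gamma \left( x+\epsilon \right) =\Gamma \left( x\right) +\Gamma \left( x\right) \psi \left( x\right) \epsilon +O\left( \epsilon ^{2}\right) $: the $\epsilon ^{0}$ parts cancel and the coefficient of $\epsilon $ is $\Gamma \left( \beta \right) \Gamma \left( 1+\beta -\alpha \right) \left[ \psi \left( 1+\beta -\alpha \right) -\psi \left( \beta \right) \right] $. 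Dividing by $\epsilon $, passing to the limit in the remaining (now regular) factors, and using (\ref{Beta_property}) in the form $\Gamma \left( 1-\alpha \right) \Gamma \left( \beta \right) /\Gamma \left( 1+\beta -\alpha \right) =\mathrm{B}\left( 1-\alpha ,\beta \right) $, I land on $\beta ^{2}\mathrm{B}\left( 1-\alpha ,\beta \right) \left[ \psi \left( 1+\beta -\alpha \right) -\psi \left( \beta \right) \right] $, which is (\ref{3F2(1)}).

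The only genuinely delicate point is the bookkeeping of this confluent limit: one must verify that the two leading $\epsilon ^{0}$ terms of the numerator cancel exactly, so that no $O(\epsilon ^{2})$ remainder contaminates the quotient after division by $\epsilon $, and one should note that the hypotheses $\alpha \neq 1$, $\mathrm{Re}\,\alpha <2$ are precisely what keep $\Gamma \left( 1-\alpha \right) $ finite while $a\neq b$ and $c\neq 1$ hold for every small $\epsilon \neq 0$, so the source identity applies. As a shorter alternative that stays within the spirit of the paper, one may instead use $(\beta )_{k}^{2}/(\beta +1)_{k}^{2}=\beta ^{2}/(\beta +k)^{2}$ to rewrite the series as $\beta ^{2}\sum_{k\ge 0}(\alpha )_{k}/[(\beta +k)^{2}k!]$, recognize $\sum_{k\ge 0}(\alpha )_{k}/[(\beta +k)k!]=\mathrm{B}\left( 1-\alpha ,\beta \right) $ from (\ref{2F1_reduction_beta_2}) at $z=1$, and then differentiate in $\beta $ under the summation sign; this route, however, only manifestly converges for $\mathrm{Re}\,\alpha <1$ and then requires analytic continuation to reach the full stated range.
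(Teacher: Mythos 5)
Your argument is exactly the paper's own proof: substitute $a=\beta+\epsilon$, $b=\beta$, $c=\alpha$ in Prudnikov's Eqn.\ 7.4.4(16), combine over a common denominator, expand with $\Gamma(x+\epsilon)=\Gamma(x)+\Gamma(x)\psi(x)\epsilon+O(\epsilon^{2})$, and pass to the limit $\epsilon\rightarrow 0$, with the same final identification via (\ref{Beta_property}); your result $\beta^{2}\,\mathrm{B}(1-\alpha,\beta)\left[\psi(1+\beta-\alpha)-\psi(\beta)\right]$ is the intended statement (the ``$b$'' in (\ref{3F2(1)}) is evidently a typo for $\beta$). The extra remarks on convergence and the alternative series route are fine but not needed; the proof is correct and essentially identical to the paper's.
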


\begin{theorem}
The following parameter derivative holds true:%
\begin{eqnarray}
\frac{\partial }{\partial b}\mathrm{B}_{z}\left( a,b\right) &=&\frac{\left(
1-z\right) ^{b}}{b^{2}}\,_{3}F_{2}\left( \left.
\begin{array}{c}
1-a,b,b \\
b+1,b+1%
\end{array}%
\right\vert 1-z\right)  \label{DbB_resultado} \\
&&-\ln \left( 1-z\right) \,\mathrm{B}_{1-z}\left( b,a\right) -\mathrm{B}%
\left( a,b\right) \left[ \psi \left( a+b\right) -\psi \left( b\right) \right]
.  \notag
\end{eqnarray}
\end{theorem}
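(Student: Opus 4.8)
The plan is to avoid differentiating the integral representation directly—which would produce a $_{3}F_{2}$ at argument $z$ rather than at $1-z$—and instead to exploit the reflection formula (\ref{Beta_reflection}) together with the parameter derivative (\ref{DaB_resultado}) already established for the \emph{first} argument. First I would differentiate both sides of $\mathrm{B}_{z}\left( a,b\right) +\mathrm{B}_{1-z}\left( b,a\right) =\mathrm{B}\left( a,b\right) $ with respect to $b$, obtaining
\[
\frac{\partial }{\partial b}\mathrm{B}_{z}\left( a,b\right) =\frac{\partial }{\partial b}\mathrm{B}\left( a,b\right) -\frac{\partial }{\partial b}\mathrm{B}_{1-z}\left( b,a\right) .
\]

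The first term on the right is handled by taking the logarithmic derivative of (\ref{Beta_property}), which gives $\frac{\partial }{\partial b}\mathrm{B}\left( a,b\right) =\mathrm{B}\left( a,b\right) \left[ \psi \left( b\right) -\psi \left( a+b\right) \right] $; this is precisely minus the last bracketed term in (\ref{DbB_resultado}). For the second term, the key observation is that in $\mathrm{B}_{1-z}\left( b,a\right) $ the variable $b$ occupies the \emph{first} parameter slot and the upper limit is $1-z$, so (\ref{DaB_resultado}) applies verbatim under the substitution $\left( z,a,b\right) \mapsto \left( 1-z,b,a\right) $:
\[
\frac{\partial }{\partial b}\mathrm{B}_{1-z}\left( b,a\right) =\ln \left( 1-z\right) \,\mathrm{B}_{1-z}\left( b,a\right) -\frac{\left( 1-z\right) ^{b}}{b^{2}}\,_{3}F_{2}\left( \left. \begin{array}{c} 1-a,b,b \\ b+1,b+1 \end{array} \right\vert 1-z\right) .
\]
Inserting these two expressions into the displayed identity and collecting terms yields (\ref{DbB_resultado}) at once.

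The step I expect to require the most care is bookkeeping rather than analysis: the reflection identity swaps \emph{both} the limit $z\leftrightarrow 1-z$ and the parameters $a\leftrightarrow b$, so $\frac{\partial }{\partial b}$ of $\mathrm{B}_{1-z}\left( b,a\right) $ must be recognized as a first-argument derivative in order to legitimately quote the earlier theorem. Beyond that, one only needs to check that the conditions under which (\ref{DaB_resultado}) and differentiation under the integral sign in (\ref{Beta_incomplete_def}) hold are preserved by the substitution, so that no additional hypotheses on $a$, $b$, $z$ are introduced. No further obstacle is anticipated, since the $_{3}F_{2}$ term is carried through unchanged.
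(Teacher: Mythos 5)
Your argument is correct, and it is genuinely different from the proof in the paper. The paper differentiates the integral representation (\ref{Beta_incomplete_def}) directly, substitutes $\tau=1-t$, expands via the binomial series (\ref{Binomial_theorem}) and the primitive (\ref{Int_xp_lnx}), and then must evaluate the resulting boundary-term series at argument $1$; this is exactly why the separate lemma (\ref{3F2(1)}) for $_{3}F_{2}\left(\left.\begin{smallmatrix}\alpha,\beta,\beta\\ \beta+1,\beta+1\end{smallmatrix}\right\vert 1\right)$ is proved beforehand, and that evaluation is what produces the term $\mathrm{B}\left(a,b\right)\left[\psi\left(a+b\right)-\psi\left(b\right)\right]$. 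You instead differentiate the reflection identity (\ref{Beta_reflection}) with respect to $b$, obtain the complete-beta term from the logarithmic derivative of (\ref{Beta_property}), and recognize $\frac{\partial}{\partial b}\mathrm{B}_{1-z}\left(b,a\right)$ as a first-argument derivative so that (\ref{DaB_resultado}) applies under $\left(z,a,b\right)\mapsto\left(1-z,b,a\right)$; the bookkeeping you flag is indeed the only delicate point, and your substitution is carried out correctly (the signs also match, since $\mathrm{B}\left(a,b\right)\left[\psi\left(b\right)-\psi\left(a+b\right)\right]=-\mathrm{B}\left(a,b\right)\left[\psi\left(a+b\right)-\psi\left(b\right)\right]$). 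What each approach buys: yours is shorter, needs no fresh series manipulation, and makes the structure of (\ref{DbB_resultado}) transparent (incomplete part from (\ref{DaB_resultado}), complete part from (\ref{Beta_property})); the paper's route is self-contained at the level of series and, as a by-product, establishes the reduction formula (\ref{3F2(1)}), which is advertised as a new result in its own right and could not be obtained from your shortcut alone --- indeed, combining your derivation with the paper's intermediate identity (\ref{DbB_sums}) would give an independent proof of (\ref{3F2(1)}). The only caveat, which you already note, is that the reflection formula and the complete beta term implicitly confine the identity to parameters where $\mathrm{B}_{1-z}\left(b,a\right)$ and $\mathrm{B}\left(a,b\right)$ make sense (essentially $\mathrm{Re}\,a,\mathrm{Re}\,b>0$, up to analytic continuation), which is the same implicit domain as in the paper's statement, so no generality is lost.
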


\begin{proof}
According to the definition of the incomplete beta function (\ref%
{Beta_incomplete_def}), we have
\begin{equation*}
\frac{\partial }{\partial b}\mathrm{B}_{z}\left( a,b\right)
=\int_{0}^{z}t^{a-1}\left( 1-t\right) ^{b-1}\ln \left( 1-t\right) dt.
\end{equation*}%
Perform the substitution $\tau =1-t$, and apply the formulas (\ref%
{Binomial_theorem})\ and (\ref{Int_xp_lnx}), to obtain%
\begin{eqnarray}
&&\frac{\partial }{\partial b}\mathrm{B}_{z}\left( a,b\right)
\label{DbB_sums} \\
&=&\left( 1-z\right) ^{b}\left\{ \sum_{k=0}^{\infty }\frac{\left( 1-a\right)
_{k}\left( 1-z\right) ^{k}}{k!\left( b+k\right) ^{2}}-\ln \left( 1-z\right)
\sum_{k=0}^{\infty }\frac{\left( 1-a\right) _{k}\left( 1-z\right) ^{k}}{%
k!\left( b+k\right) }\right\}   \notag \\
&&-\sum_{k=0}^{\infty }\frac{\left( 1-a\right) _{k}}{k!\left( b+k\right) ^{2}%
}.  \notag
\end{eqnarray}%
Finally, write the sums given in (\ref{DbB_sums})\ as hypergeometric
functions and apply the results given in (\ref{2F1_reduction_beta_2})\ and (%
\ref{3F2(1)})\ to arrive at (\ref{DbB_resultado}), as we wanted to prove.
\end{proof}

\subsection{Calculation of sums involving the digamma function}

\begin{theorem}
For $b\neq c+1$ and $z\in
%TCIMACRO{\U{2102} }%
%BeginExpansion
\mathbb{C}
%EndExpansion
$, $\left\vert z\right\vert <1$ the following sum holds true:%
\begin{eqnarray}
&&\sum_{k=0}^{\infty }\frac{\left( b\right) _{k}}{\left( c\right) _{k}}\psi
\left( b+k\right) \,z^{k}  \label{Reduction_b_c} \\
&=&\left( c-1\right) z^{1-c}\left\{ \frac{1}{\left( b-c+1\right) ^{2}}%
\,_{3}F_{2}\left( \left.
\begin{array}{c}
2-c,b-c+1,b-c+1 \\
b-c+2,b-c+2%
\end{array}%
\right\vert 1-z\right) \right.  \notag \\
&&+\left. \left( 1-z\right) ^{c-b-1}\left[ \left( \psi \left( b-c+1\right)
-\ln \left( 1-z\right) \right) \,\mathrm{B}\left( b-c+1,c-1\right) +\psi
\left( b\right) \,\mathrm{B}_{1-z}\left( b-c+1,c-1\right) \right] \right\}
\notag
\end{eqnarray}
\end{theorem}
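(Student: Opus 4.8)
The idea is to recognize the series on the left-hand side as essentially a parameter derivative of a $_2F_1$ function evaluated through the beta-function reduction formula \eqref{2F1_reduction_beta_2}. Start from the known reduction formula \eqref{2F1_reduction_beta_2}, which I would first rewrite in a form adapted to the summand $\left(b\right)_k/\left(c\right)_k$. Concretely, one has
\begin{equation*}
{}_2F_1\!\left(\left.\begin{array}{c} \alpha,\beta \\ \beta+1\end{array}\right|z\right)=\sum_{k=0}^{\infty}\frac{\left(\alpha\right)_k\left(\beta\right)_k}{\left(\beta+1\right)_k}\frac{z^k}{k!}=\beta\,z^{-\beta}\,\mathrm{B}_z\!\left(\beta,1-\alpha\right).
\end{equation*}
Using $\left(\beta\right)_k/\left(\beta+1\right)_k=\beta/(\beta+k)$ and the elementary identity $\left(\alpha\right)_k/k!=\binom{\alpha+k-1}{k}$, I would choose the parameters so that $\left(\alpha\right)_k/k!$ produces the ratio $\left(b\right)_k/\left(c\right)_k$ appearing in \eqref{Reduction_b_c}. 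The clean way to do this is to set $\beta=b-c+1$ and $\alpha=2-c$, so that $\left(\beta+1\right)_k=\left(b-c+2\right)_k$ and one recovers, after dividing by $\beta+k=b-c+1+k$ and reindexing, precisely the structure $\sum_k \left(b\right)_k z^k/\left(c\right)_k$; the factor $z^{1-c}$ and the constant $c-1$ in \eqref{Reduction_b_c} are exactly the bookkeeping factors coming from $\beta z^{-\beta}$ and from the shift in the summation variable.

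With the right substitution identified, the main step is parameter differentiation. I would differentiate the identity with respect to the parameter that, via \eqref{D[(x)_n]}, brings down the factor $\psi\left(b+k\right)$ in the summand — this means differentiating with respect to $\alpha$ (equivalently $c$), since $\tfrac{d}{d\alpha}\left(\alpha\right)_k=\left(\alpha\right)_k\left[\psi(\alpha+k)-\psi(\alpha)\right]$, and after the reindexing this $\psi$ becomes $\psi\left(b+k\right)$ up to terms that can be absorbed. On the right-hand side, $\partial_\alpha\bigl[\beta z^{-\beta}\mathrm{B}_z(\beta,1-\alpha)\bigr]=-\beta z^{-\beta}\,\partial_b\mathrm{B}_z(\beta,1-\alpha)$ (with a sign from $1-\alpha$), and here I invoke the already-proved parameter derivative \eqref{DbB_resultado} for $\partial_b\mathrm{B}_z(a,b)$. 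Substituting \eqref{DbB_resultado} with $a\mapsto 1-\alpha=c-1$ and $b\mapsto\beta=b-c+1$ produces exactly the three pieces on the right of \eqref{Reduction_b_c}: the $_3F_2$ term at argument $1-z$ (from the first line of \eqref{DbB_resultado}), the $\ln(1-z)\,\mathrm{B}_{1-z}$ term, and the $\mathrm{B}(a,b)\bigl[\psi(a+b)-\psi(b)\bigr]$ term; the digamma $\psi(b)$ multiplying $\mathrm{B}_{1-z}$ in \eqref{Reduction_b_c} arises when combining the $\psi$ contributions from differentiating the left side with the $\psi$ contribution $-\psi(\alpha)$ and from \eqref{Beta_reflection}.

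The step I expect to be the genuine obstacle is the bookkeeping of the digamma and beta terms: on the left one gets $\sum_k \tfrac{\left(b\right)_k}{\left(c\right)_k}\bigl[\psi(b+k)-\psi(\text{something})\bigr]z^k$, so there is a spurious $\psi$-free sum (another $_2F_1$, summable again by \eqref{2F1_reduction_beta_2}) and a constant $\psi$ term that must be moved to the right and shown to reorganize exactly into the $\psi(b)$ and $\psi(b-c+1)$ coefficients displayed in \eqref{Reduction_b_c}. Matching these requires careful use of \eqref{psi(1+z)} to handle the index shift $b+k$ versus $(b-c+1)+k'$, and of \eqref{Beta_property} to convert ratios of Gamma functions into the beta functions $\mathrm{B}(b-c+1,c-1)$; the reflection identity \eqref{Beta_reflection} is what converts $\mathrm{B}_z$ into $\mathrm{B}_{1-z}$ so that everything lands at argument $1-z$. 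Once the constant terms are correctly sorted, convergence for $|z|<1$ follows from the ratio test applied to \eqref{Reduction_b_c} (the $\psi(b+k)$ factor grows only logarithmically), and the restriction $b\neq c+1$ is precisely the condition $\beta\neq 0$ needed to divide by $\beta+k$ at $k=0$ and to keep \eqref{2F1_reduction_beta_2} non-degenerate.
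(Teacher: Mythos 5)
There is a genuine gap: the identification on which your whole argument rests is false. You propose to realize the left-hand series of \eqref{Reduction_b_c} inside \eqref{2F1_reduction_beta_2} with $\alpha=2-c$, $\beta=b-c+1$, and to generate $\psi(b+k)$ by differentiating with respect to $\alpha$. Neither step works. The terms of $_{2}F_{1}\left(2-c,b-c+1;b-c+2;z\right)$ are $\frac{(2-c)_{k}}{k!}\,\frac{b-c+1}{b-c+1+k}\,z^{k}$, and no reindexing or overall factor of the form $(c-1)z^{1-c}$ turns this into $\sum_{k}\frac{(b)_{k}}{(c)_{k}}z^{k}$ (test $c=1$: you would need $\frac{(b)_{k}}{k!}$ versus $\frac{b}{b+k}$); the parameters $2-c$, $b-c+1$ are those of the $_{3}F_{2}$ at argument $1-z$ in the \emph{answer}, i.e.\ the output of \eqref{DbB_resultado}, not a rewriting of the input series. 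Moreover, differentiating with respect to $\alpha$ (your ``$\alpha$, equivalently $c$'') brings down $\psi(2-c+k)-\psi(2-c)$ via \eqref{D[(x)_n]}; this cannot be ``absorbed'' into $\psi(b+k)$, which can only arise from differentiating a factor $(b)_{k}$, i.e.\ $\Gamma(b+k)$, with respect to $b$. Your use of \eqref{DbB_resultado} is also inconsistent with your own setup: with $\mathrm{B}_{z}(\beta,1-\alpha)=\mathrm{B}_{z}(b-c+1,c-1)$ the $\alpha$-derivative acts on the \emph{second} slot $c-1$, so \eqref{DbB_resultado} would yield $_{3}F_{2}\left(c-b,c-1,c-1;c,c;1-z\right)$, not the $_{3}F_{2}\left(2-c,b-c+1,b-c+1;b-c+2,b-c+2;1-z\right)$ of the statement. (Also, with your $\beta=b-c+1$, the degeneracy $\beta=0$ is $b=c-1$, not $b\neq c+1$.)

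The workable route — and the paper's — starts instead from \eqref{2F1_reduction_beta}, namely $\sum_{k=0}^{\infty}\frac{(b)_{k}}{(c)_{k}}z^{k}={}_{2}F_{1}\left(1,b;c;z\right)=(c-1)\,z^{1-c}(1-z)^{c-b-1}\,\mathrm{B}_{z}(c-1,b-c+1)$, which literally contains the series in question, and differentiates both sides with respect to $b$. On the left, \eqref{D[(x)_n]} gives the target sum minus $\psi(b)$ times the same $_{2}F_{1}$, which is resummed by \eqref{2F1_reduction_beta}; on the right, the product rule produces a $-\ln(1-z)$ term from the $b$-dependent prefactor $(1-z)^{c-b-1}$ together with $\partial_{b}\mathrm{B}_{z}(c-1,b-c+1)$, and this is where \eqref{DbB_resultado} enters, with first parameter $c-1$ and differentiated parameter $b-c+1$, which is what produces the stated $_{3}F_{2}$ at $1-z$; two applications of \eqref{Beta_reflection} then sort the $\mathrm{B}$ and $\mathrm{B}_{1-z}$ terms and the coefficients of $\psi(b)$ and $\psi(b-c+1)$. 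Your overall strategy (differentiate a $_{2}F_{1}$--incomplete-beta reduction and invoke \eqref{DbB_resultado}) is sound in spirit — it is essentially how the paper proves \eqref{Reduction_a_b_b+1} — but with the identity and differentiation variable you chose, the computation does not produce the left-hand side of \eqref{Reduction_b_c}, so the proof as proposed does not go through.
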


\begin{proof}
On the one hand, applying the ratio test, we see that the sum given in (\ref%
{Reduction_b_c})\ converges for $\left\vert z\right\vert <1$ and diverges
for $\left\vert z\right\vert >1$. Indeed, taking
\begin{equation*}
a_{k}=\frac{\left( b\right) _{k}}{\left( c\right) _{k}}\psi \left(
b+k\right) z^{k},
\end{equation*}%
and taking into account (\ref{psi(1+z)}), we have%
\begin{eqnarray*}
\lim_{k\rightarrow \infty }\left\vert \frac{a_{k+1}}{a_{k}}\right\vert
&=&\lim_{k\rightarrow \infty }\left\vert \frac{\left( b+k\right) \psi \left(
b+k+1\right) }{\left( c+k\right) \psi \left( b+k\right) }z\right\vert \\
&=&\lim_{k\rightarrow \infty }\left\vert \frac{b+k}{c+k}\left( \frac{1}{%
\left( b+k\right) \psi \left( b+k\right) }+1\right) z\right\vert =\left\vert
z\right\vert .
\end{eqnarray*}%
On the other hand, let us differentiate both sides of the reduction formula
\cite[Eqn. 7.3.1(119)]{Prudnikov3} with respect to parameter $b$:%
\begin{eqnarray}
\sum_{k=0}^{\infty }\frac{\left( b\right) _{k}}{\left( c\right) _{k}}z^{k}
&=&\,_{2}F_{1}\left( \left.
\begin{array}{c}
1,b \\
c%
\end{array}%
\right\vert z\right)  \label{2F1_reduction_beta} \\
&=&z^{1-c}\left( 1-z\right) ^{c-b-1}\left( c-1\right) \,\mathrm{B}_{z}\left(
c-1,b-c+1\right) .  \notag
\end{eqnarray}%
Apply (\ref{D[(x)_n]})\ and (\ref{2F1_reduction_beta}) to the LHS\ of (\ref%
{2F1_reduction_beta}), to obtain:%
\begin{eqnarray}
&&\frac{\partial }{\partial b}\sum_{k=0}^{\infty }\frac{\left( b\right) _{k}%
}{\left( c\right) _{k}}z^{k}=\sum_{k=0}^{\infty }\frac{\left( b\right) _{k}}{%
\left( c\right) _{k}}\left[ \psi \left( b+k\right) -\psi \left( b\right) %
\right] z^{k}  \label{LHS} \\
&=&\sum_{k=0}^{\infty }\frac{\left( b\right) _{k}}{\left( c\right) _{k}}%
\left[ \psi \left( b+k\right) \right] z^{k}-\psi \left( b\right)
\,z^{1-c}\left( 1-z\right) ^{c-b-1}\left( c-1\right) \,\mathrm{B}_{z}\left(
c-1,b-c+1\right) .  \notag
\end{eqnarray}%
On the RHS\ of (\ref{2F1_reduction_beta}), we get
\begin{eqnarray}
&&\left( c-1\right) z^{1-c}\frac{\partial }{\partial b}\left[ \left(
1-z\right) ^{c-b-1}\,\mathrm{B}_{z}\left( c-1,b-c+1\right) \right]
\label{RHS} \\
&=&\left( c-1\right) z^{1-c}\left( 1-z\right) ^{c-b-1}\left[ -\ln \left(
1-z\right) \mathrm{B}_{z}\left( c-1,b-c+1\right) +\frac{\partial }{\partial b%
}\mathrm{B}_{z}\left( c-1,b-c+1\right) \right] .  \notag
\end{eqnarray}%
According to (\ref{DbB_resultado}), we have%
\begin{eqnarray}
&&\frac{\partial }{\partial b}\mathrm{B}_{z}\left( c-1,b-c+1\right)
\label{DbB_RHS} \\
&=&\frac{\left( 1-z\right) ^{b-c+1}}{\left( b-c+1\right) ^{2}}%
\,_{3}F_{2}\left( \left.
\begin{array}{c}
2-c,b-c+1,b-c+1 \\
b-c+2,b-c+2%
\end{array}%
\right\vert 1-z\right)  \notag \\
&&-\ln \left( 1-z\right) \,\mathrm{B}_{1-z}\left( b-c+1,c-1\right) +\mathrm{B%
}\left( c-1,b-c+1\right) \left[ \psi \left( 1+b-c\right) -\psi \left(
b\right) \right] .  \notag
\end{eqnarray}%
Now, insert (\ref{DbB_RHS})\ in (\ref{RHS})\ and apply the formula (\ref%
{Beta_reflection}), to arrive at%
\begin{eqnarray}
&&\left( c-1\right) z^{1-c}\frac{\partial }{\partial b}\left[ \left(
1-z\right) ^{c-b-1}\,\mathrm{B}_{z}\left( c-1,b-c+1\right) \right]
\label{RHS_2} \\
&=&\left( c-1\right) z^{1-c}\left\{ \left( 1-z\right) ^{c-b+1}\mathrm{B}%
\left( c-1,b-c+1\right) \left[ \psi \left( 1+b-c\right) -\psi \left(
b\right) -\ln \left( 1-z\right) \right] \right.  \notag \\
&&+\left. \frac{1}{\left( b-c+1\right) ^{2}}\,_{3}F_{2}\left( \left.
\begin{array}{c}
2-c,b-c+1,b-c+1 \\
b-c+2,b-c+2%
\end{array}%
\right\vert 1-z\right) \right\} .  \notag
\end{eqnarray}%
Finally, equate the results given in (\ref{LHS})\ and (\ref{RHS_2}), and
apply again (\ref{Beta_reflection})\ to complete the proof.
\end{proof}

\begin{remark}
It is worth noting that for $z=1$, the sum given in (\ref{Reduction_b_c})\
can be calculated taking $a=1$ in \cite{SumsPsiJL}:%
\begin{eqnarray}
&&\sum_{k=0}^{\infty }\frac{\left( a\right) _{k}\left( b\right) _{k}}{%
k!\left( c\right) _{k}}\psi \left( b+k\right)   \label{Gauss_Psi} \\
&=&\frac{\Gamma \left( c\right) \Gamma \left( c-a-b\right) }{\Gamma \left(
c-b\right) \Gamma \left( c-a\right) }\left[ \psi \left( c-b\right) -\psi
\left( c-a-b\right) +\psi \left( b\right) \right] ,  \notag \\
&&\mathrm{Re}\left( c-a-b\right) >0,  \notag
\end{eqnarray}%
thus%
\begin{eqnarray*}
&&\sum_{k=0}^{\infty }\frac{\left( b\right) _{k}}{\left( c\right) _{k}}\psi
\left( b+k\right) =\frac{c-1}{c-b-1}\left[ \frac{1}{c-b-1}+\psi \left(
b\right) \right] , \\
&&\mathrm{Re}\left( c-b\right) >1.
\end{eqnarray*}
\end{remark}

\begin{corollary}
For $z\in
%TCIMACRO{\U{2102} }%
%BeginExpansion
\mathbb{C}
%EndExpansion
$ and $\left\vert z\right\vert <1$ the following formula holds true:%
\begin{eqnarray}
&&\sum_{k=1}^{\infty }\psi \left( b+k\right) z^{k}  \label{Reduction_b_b} \\
&=&\left( b-1\right) z^{1-b}\,_{3}F_{2}\left( \left.
\begin{array}{c}
1,1,2-b \\
2,2%
\end{array}%
\right\vert 1-z\right) +\frac{z^{1-b}}{z-1}\left[ \gamma +\ln \left(
1-z\right) \right] +\frac{z^{1-b}}{z-1}\psi \left( b\right) .  \notag
\end{eqnarray}
\end{corollary}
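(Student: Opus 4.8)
The plan is to obtain this Corollary as the special case $c=b$ of the preceding Theorem, equation (\ref{Reduction_b_c}). When $c=b$ one has $(b)_k/(c)_k=1$ for every $k$, so the left-hand side of (\ref{Reduction_b_c}) collapses to $\sum_{k=0}^{\infty}\psi(b+k)z^{k}$, which is exactly $\psi(b)$ (the $k=0$ term) plus the series $\sum_{k=1}^{\infty}\psi(b+k)z^{k}$ that we want. Before anything else I would check that this substitution is legitimate: the only restriction in (\ref{Reduction_b_c}) is $b\neq c+1$, which is automatic here, and the a priori dangerous factor $1/(b-c+1)^{2}$ causes no trouble because $b-c+1=1\neq 0$ at $c=b$, so a plain substitution works and no limiting argument is needed.

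Next I would evaluate the right-hand side of (\ref{Reduction_b_c}) at $c=b$ term by term. The ${}_3F_2$ becomes ${}_3F_2(2-b,1,1;2,2;1-z)$, which equals ${}_3F_2(1,1,2-b;2,2;1-z)$ by symmetry of the numerator parameters; the prefactor $1/(b-c+1)^{2}$ becomes $1$; and the power $(1-z)^{c-b-1}$ becomes $(1-z)^{-1}$. For the remaining pieces I would use the standard value $\psi(1)=-\gamma$, the evaluation $\mathrm{B}(b-c+1,c-1)=\mathrm{B}(1,b-1)=1/(b-1)$ from (\ref{Beta_property}), and the elementary integral $\mathrm{B}_{1-z}(1,b-1)=\int_{0}^{1-z}(1-t)^{b-2}\,dt=(1-z^{b-1})/(b-1)$.

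After these substitutions the overall factor $c-1=b-1$ cancels exactly against the two $1/(b-1)$ factors sitting inside the beta terms, leaving the ${}_3F_2$ term with its prefactor $(b-1)z^{1-b}$ together with elementary rational and logarithmic terms. The one simplification worth flagging is the cancellation $z^{1-b}(1-z^{b-1})=z^{1-b}-1$, which converts the beta-integral contribution into a clean coefficient of $\psi(b)$. Finally I would move the $k=0$ term $\psi(b)$ from the left-hand side to the right-hand side and collect all the $\psi(b)$, $\gamma$ and $\ln(1-z)$ contributions over the common denominator $z-1$, which should produce (\ref{Reduction_b_b}).

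The main obstacle I anticipate is purely bookkeeping: tracking the several competing signs while combining the terms carrying $\psi(b)$ and $\ln(1-z)$, and confirming that the $(b-1)$ prefactors cancel completely. A secondary point deserving a sentence in the write-up is convergence: by the ratio test the series $\sum_{k\ge 1}\psi(b+k)z^{k}$ converges for $|z|<1$ (the digamma factor grows only logarithmically, exactly as in the convergence argument inside the proof of (\ref{Reduction_b_c})), so the identity obtained by specializing $c=b$ holds on the stated domain.
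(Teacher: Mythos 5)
Your plan is exactly the paper's own proof (the paper's argument is literally: separate the $k=0$ term and set $b=c$), and all of your ingredient evaluations are correct: $\psi(1)=-\gamma$, $\mathrm{B}(1,b-1)=1/(b-1)$, $\mathrm{B}_{1-z}(1,b-1)=(1-z^{b-1})/(b-1)$, and $z^{1-b}(1-z^{b-1})=z^{1-b}-1$. However, the ``purely bookkeeping'' step you postponed is precisely where the argument fails to close. If you carry the substitution through (\ref{Reduction_b_c}) exactly as printed and then move the $k=0$ term $\psi(b)$ to the right, the coefficient of $\psi(b)$ comes out as $(z^{1-b}+z-2)/(1-z)$, not $z^{1-b}/(z-1)$, and no regrouping of the $\gamma$, $\ln(1-z)$ and $\psi(b)$ contributions turns one into the other.

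The underlying reason is that the two displayed formulas are not mutually consistent as printed. Retracing the proof of (\ref{Reduction_b_c}) from (\ref{DbB_resultado}) and the reflection formula (\ref{Beta_reflection}) produces the last term with a minus sign, i.e. $-\psi(b)\,\mathrm{B}_{1-z}(b-c+1,c-1)$, and with that correction your computation goes through verbatim and yields
\begin{equation*}
\sum_{k=1}^{\infty}\psi(b+k)\,z^{k}=(b-1)\,z^{1-b}\,{}_{3}F_{2}\!\left(\left.\begin{array}{c}1,1,2-b\\2,2\end{array}\right\vert 1-z\right)+\frac{z^{1-b}}{z-1}\left[\gamma+\ln(1-z)\right]+\frac{z^{1-b}-z}{z-1}\,\psi(b),
\end{equation*}
i.e. the last coefficient should be $(z^{1-b}-z)/(z-1)$ rather than $z^{1-b}/(z-1)$. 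A numerical check at $b=2$, $z=1/2$ settles the matter: the series equals $4\ln 2-1-\gamma\approx 1.1954$, in agreement with Miller's formula \cite{Miller} and with the corrected identity above, whereas the right-hand side of (\ref{Reduction_b_b}) as printed gives $4\ln 2-2\approx 0.7726$, and literal substitution into the printed (\ref{Reduction_b_c}) gives yet a third value. Note also that only the corrected form, compared with Miller's sum, reproduces the (correct) reduction (\ref{3F2_reduction}) derived next. So your method is the right one, but as written the final claim ``which should produce (\ref{Reduction_b_b})'' is false: to obtain a valid proof you must first fix the sign in (\ref{Reduction_b_c}) (or rederive it from (\ref{DbB_resultado})), and what you then prove is the corrected identity displayed above, not (\ref{Reduction_b_b}) verbatim.
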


\begin{proof}
Put apart the term for $k=0$ in (\ref{Reduction_b_c})\ and take $b=c$.
\end{proof}

\begin{corollary}
For $z\in
%TCIMACRO{\U{2102} }%
%BeginExpansion
\mathbb{C}
%EndExpansion
$, and $a\neq 1$, the following reduction formula holds true:%
\begin{equation}
_{3}F_{2}\left( \left.
\begin{array}{c}
1,1,a \\
2,2%
\end{array}%
\right\vert z\right) =\frac{\psi \left( 2-a\right) +\gamma +\ln z+\mathrm{B}%
_{1-z}\left( 2-a,0\right) }{\left( 1-a\right) z}.  \label{3F2_reduction}
\end{equation}
\end{corollary}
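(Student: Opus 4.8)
The plan is to obtain this reduction by specialising the parameter--differentiation formula (\ref{DbB_resultado}) so that the $_{3}F_{2}$ it contains is exactly the one in the statement, and then by reducing a logarithmic integral to an incomplete beta function. First I would replace $z$ by $1-z$ throughout (\ref{DbB_resultado}), which turns the argument of the hypergeometric function there into $z$, and then set its first parameter equal to $1-a$ and its second parameter equal to $1$. With these choices the hypergeometric term becomes $z\,_{3}F_{2}(a,1,1;2,2|z)=z\,_{3}F_{2}(1,1,a;2,2|z)$, the left--hand side becomes $\int_{0}^{1-z}t^{-a}\ln(1-t)\,dt$ by the integral representation (\ref{Beta_incomplete_def}), and the two remaining terms are evaluated from $\mathrm{B}_{z}(1,1-a)=\int_{0}^{z}(1-t)^{-a}dt=[1-(1-z)^{1-a}]/(1-a)$, from $\mathrm{B}(1-a,1)=1/(1-a)$, and from $\psi(1)=-\gamma$. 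This produces the preliminary identity
\[
z\,_{3}F_{2}\left( \left.
\begin{array}{c}
1,1,a \\
2,2
\end{array}
\right\vert z\right) =\int_{0}^{1-z}t^{-a}\ln \left( 1-t\right) dt+\frac{\ln
z\left[ 1-\left( 1-z\right) ^{1-a}\right] }{1-a}+\frac{\psi \left(
2-a\right) +\gamma }{1-a}.
\]

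Next I would evaluate $\int_{0}^{1-z}t^{-a}\ln(1-t)\,dt$ by parts, with $dv=t^{-a}dt$ (so $v=t^{1-a}/(1-a)$) and $u=\ln(1-t)$. The contribution of the lower endpoint vanishes because $t^{1-a}\ln(1-t)\rightarrow 0$ as $t\rightarrow 0^{+}$, the upper endpoint $t=1-z$ contributes $(1-z)^{1-a}\ln z/(1-a)$, and the remaining integral is
\[
\frac{1}{1-a}\int_{0}^{1-z}\frac{t^{1-a}}{1-t}\,dt=\frac{1}{1-a}
\int_{0}^{1-z}t^{\left( 2-a\right) -1}\left( 1-t\right) ^{0-1}\,dt=\frac{
\mathrm{B}_{1-z}\left( 2-a,0\right) }{1-a},
\]
which is precisely an incomplete beta function by (\ref{Beta_incomplete_def}). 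Inserting this into the preliminary identity, the two terms carrying the factor $(1-z)^{1-a}\ln z$ cancel, leaving $\left[ \ln z+\mathrm{B}_{1-z}(2-a,0)+\psi(2-a)+\gamma \right] /(1-a)$ on the right; dividing through by $z$ yields (\ref{3F2_reduction}).

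Everything after the specialisation is bookkeeping, so the only delicate point is carrying out the substitutions in (\ref{DbB_resultado}) consistently --- in particular not confusing the parameter $a$ of the statement with the first parameter of (\ref{DbB_resultado}) --- and checking that the boundary term in the integration by parts really does vanish at $t=0$, which requires $\mathrm{Re}\,a<2$. That restriction, together with the condition $\left\vert 1-z\right\vert <1$ implicit in the hypergeometric series after the change $z\mapsto 1-z$, is then removed by analytic continuation, giving the identity for all admissible $z$ and all $a\neq 1$. The same conclusion can alternatively be read off by solving Corollary (\ref{Reduction_b_b}) for its $_{3}F_{2}$ after the replacements $z\mapsto 1-z$ and $b\mapsto 2-a$, but routing through (\ref{DbB_resultado}) keeps the incomplete beta function in sight at every step.
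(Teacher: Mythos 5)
Your derivation is correct, but it follows a genuinely different route from the paper. The paper obtains (\ref{3F2_reduction}) by comparing its Corollary (\ref{Reduction_b_b}) (itself a consequence of Theorem (\ref{Reduction_b_c})) with Miller's known closed form $\sum_{k\geq 1}\psi(b+k)z^{k}=\frac{z}{1-z}\bigl[\psi(b)+z^{-b}\mathrm{B}_{z}(b,0)\bigr]$, and then solving for the $_{3}F_{2}$ with $b=2-a$; your concluding remark about ``solving Corollary (\ref{Reduction_b_b}) for its $_{3}F_{2}$'' is essentially that argument. What you actually carry out instead is a direct specialisation of (\ref{DbB_resultado}) at $b=1$, $a\mapsto 1-a$, $z\mapsto 1-z$, which collapses the parameter-derivative on the left to the elementary integral $\int_{0}^{1-z}t^{-a}\ln(1-t)\,dt$, followed by an integration by parts that produces $\mathrm{B}_{1-z}(2-a,0)$ and cancels the $(1-z)^{1-a}\ln z$ terms; I checked the bookkeeping ($\mathrm{B}_{z}(1,1-a)=[1-(1-z)^{1-a}]/(1-a)$, $\mathrm{B}(1-a,1)=1/(1-a)$, $\psi(2-a)-\psi(1)=\psi(2-a)+\gamma$, vanishing boundary term for $\mathrm{Re}\,a<2$) and it goes through. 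Your route is more self-contained --- it needs neither the external reference to Miller nor Theorem (\ref{Reduction_b_c}) and its corollary --- and it makes the provisional restrictions $\mathrm{Re}\,a<2$ and $|1-z|<1$ (lifted by analytic continuation) explicit, which the paper leaves implicit; the paper's route buys brevity by reusing the digamma sums it has already established, which is in keeping with the theme of that section.
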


\begin{proof}
Taking into account (\ref{2F1_reduction_beta})\ for $c=b+1$, compare (\ref%
{Reduction_b_b})\ to the result found in the literature \cite{Miller}:%
\begin{eqnarray*}
\sum_{k=1}^{\infty }\psi \left( b+k\right) z^{k} &=&\frac{z}{1-z}\left[ \psi
\left( b\right) +\frac{1}{b}\,_{2}F_{1}\left( \left.
\begin{array}{c}
1,b \\
b+1%
\end{array}%
\right\vert z\right) \right] \\
&=&\frac{z}{1-z}\left[ \psi \left( b\right) +\frac{\mathrm{B}_{z}\left(
b,0\right) }{z^{b}}\right] ,
\end{eqnarray*}%
and solve for the $_{3}F_{2}$ function with $a=2-b$ to obtain the desired
result.
\end{proof}

\begin{remark}
It is worth noting that for $\left( 2-a\right) \in
%TCIMACRO{\U{211a} }%
%BeginExpansion
\mathbb{Q}
%EndExpansion
-\left\{ -1,-2,\ldots \right\} $, the incomplete beta function $\mathrm{B}%
_{1-z}\left( 2-a,0\right) $ given in (\ref{3F2_reduction})\ can be expressed
in terms of elementary functions \cite{BetaReductionJL}. For instance,
taking $a=3/2$ in (\ref{3F2_reduction}), and considering (\ref{psi(1/2)})\
and the formula for $n=0,1,\ldots $ \cite{BetaReductionJL}%
\begin{equation*}
\mathrm{B}_{z}\left( n+\frac{1}{2},0\right) =2\left( \tanh ^{-1}\sqrt{z}%
-\sum_{k=0}^{n-1}\frac{z^{k+1/2}}{2k+1}\right) ,
\end{equation*}%
we arrive at%
\begin{equation*}
_{3}F_{2}\left( \left.
\begin{array}{c}
1,1,\frac{3}{2} \\
2,2%
\end{array}%
\right\vert z\right) =\frac{4}{z}\ln \left( \frac{2\left( 1-\sqrt{1-z}%
\right) }{z}\right) ,
\end{equation*}%
which is given in the literature \cite[Eqn. 7.4.2(365)]{Prudnikov3}.
\end{remark}

\begin{remark}
As a consistency test, we can recover a known formula taking the limit $%
a\rightarrow 1$ in (\ref{3F2_reduction}). Indeed,
\begin{equation*}
_{3}F_{2}\left( \left.
\begin{array}{c}
1,1,1 \\
2,2%
\end{array}%
\right\vert z\right) =\lim_{a\rightarrow 1}\frac{\psi \left( 2-a\right)
+\gamma +\ln z+\mathrm{B}_{1-z}\left( 2-a,0\right) }{\left( 1-a\right) z}.
\end{equation*}%
Perform the substitution $b=1-a$, take into account (\ref%
{Beta_incomplete_def}),\ and the formula \cite[Eqn. 5.9.16]{NIST}:%
\begin{equation*}
\psi \left( z\right) +\gamma =\int_{0}^{1}\frac{1-t^{z-1}}{1-t}dt,
\end{equation*}%
to obtain%
\begin{eqnarray*}
_{3}F_{2}\left( \left.
\begin{array}{c}
1,1,1 \\
2,2%
\end{array}%
\right\vert z\right) &=&\lim_{b\rightarrow 0}\frac{\psi \left( 1+b\right)
+\gamma +\ln z+\mathrm{B}_{1-z}\left( 1+b,0\right) }{b\,z} \\
&=&\lim_{b\rightarrow 0}\frac{1}{b\,z}\left[ \ln z+\int_{0}^{1}\frac{1-t^{b}%
}{1-t}dt+\int_{0}^{1-z}\frac{t^{b}}{1-t}dt\right] .
\end{eqnarray*}%
Now, perform the susbstitution $\tau =1-t$, and apply the Taylor series:%
\begin{equation*}
\left( 1-\tau \right) ^{b}=\sum_{n=0}^{\infty }\frac{\ln ^{n}\left( 1-\tau
\right) }{n!}b^{n},
\end{equation*}%
to arrive at%
\begin{eqnarray*}
&&_{3}F_{2}\left( \left.
\begin{array}{c}
1,1,1 \\
2,2%
\end{array}%
\right\vert z\right) \\
&=&\lim_{b\rightarrow 0}\frac{1}{b\,z}\left[ \ln z-\int_{0}^{1}\frac{%
1-\left( 1-\tau \right) ^{b}}{\tau }d\tau +\int_{1}^{z}\frac{\left( 1-\tau
\right) ^{b}}{\tau }dt\right] \\
&=&\lim_{b\rightarrow 0}\frac{1}{b\,z}\left[ \ln z-\int_{0}^{1}\frac{\ln
\left( 1-\tau \right) }{\tau }b\,d\tau -\int_{1}^{z}\left( \frac{1}{\tau }+%
\frac{\ln \left( 1-\tau \right) }{\tau }b\right) dt\right] \\
&=&-\frac{1}{z}\int_{0}^{z}\frac{\ln \left( 1-\tau \right) }{\tau }d\tau .
\end{eqnarray*}%
From the following formula of the dilogarithm function \cite[Eqn. 25.12.2]%
{NIST}
\begin{equation*}
\mathrm{Li}_{2}\left( z\right) =-\int_{0}^{z}\frac{\ln \left( 1-\tau \right)
}{\tau }d\tau ,
\end{equation*}%
we recover the following result found in the literature \cite[Eqn. 7.4.2(355)%
]{Prudnikov3}:%
\begin{equation*}
_{3}F_{2}\left( \left.
\begin{array}{c}
1,1,1 \\
2,2%
\end{array}%
\right\vert z\right) =\frac{\mathrm{Li}_{2}\left( z\right) }{z}.
\end{equation*}
\end{remark}

\begin{theorem}
For $a\neq 1$ and $z\in
%TCIMACRO{\U{2102} }%
%BeginExpansion
\mathbb{C}
%EndExpansion
$, $\left\vert z\right\vert <1$ the following sum holds true:%
\begin{eqnarray}
&&\sum_{k=0}^{\infty }\frac{\left( a\right) _{k}\left( b\right) _{k}}{%
k!\left( b+1\right) _{k}}\psi \left( a+k\right) z^{k}
\label{Reduction_a_b_b+1} \\
&=&b\,z^{-b}\left\{ \left[ \ln \left( 1-z\right) -\psi \left( a\right) %
\right] \,\mathrm{B}_{1-z}\left( 1-a,b\right) +\left[ \psi \left(
1+b-a\right) -\pi \cot \left( \pi a\right) \right] \,\mathrm{B}\left(
b,1-a\right) \right.  \notag \\
&&-\left. \frac{\left( 1-z\right) ^{1-a}}{\left( 1-a\right) ^{2}}%
\,_{3}F_{2}\left( \left.
\begin{array}{c}
1-b,1-a,1-a \\
2-a,2-a%
\end{array}%
\right\vert 1-z\right) \right\} .  \notag
\end{eqnarray}
\end{theorem}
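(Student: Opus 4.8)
The plan is to differentiate a known reduction of a ${}_{2}F_{1}$ with respect to a numerator parameter, exactly in the spirit of the proof of (\ref{Reduction_b_c}). Since $(b)_{k}/(b+1)_{k}=b/(b+k)$, the series in (\ref{Reduction_a_b_b+1}) is a $\psi$-weighted perturbation of ${}_{2}F_{1}(a,b;b+1;z)$; a ratio test, using (\ref{psi(1+z)}) to control $\psi(a+k+1)/\psi(a+k)\to 1$, shows that it converges for $\left\vert z\right\vert<1$ and diverges for $\left\vert z\right\vert>1$, so term-by-term differentiation in $a$ on the open disk is legitimate.

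The starting point is the reduction formula (\ref{2F1_reduction_beta_2}) with $\alpha=a$, $\beta=b$, namely
\[
\sum_{k=0}^{\infty}\frac{(a)_{k}(b)_{k}}{k!\,(b+1)_{k}}\,z^{k}={}_{2}F_{1}\left(\left.\begin{array}{c} a,b \\ b+1 \end{array}\right\vert z\right)=b\,z^{-b}\,\mathrm{B}_{z}(b,1-a).
\]
Applying $\partial/\partial a$ and using (\ref{D[(x)_n]}) on the left-hand side produces the factor $\psi(a+k)-\psi(a)$ under the sum, so that after solving for the desired series one gets
\[
\sum_{k=0}^{\infty}\frac{(a)_{k}(b)_{k}}{k!\,(b+1)_{k}}\,\psi(a+k)\,z^{k}=b\,z^{-b}\left[\psi(a)\,\mathrm{B}_{z}(b,1-a)+\frac{\partial}{\partial a}\mathrm{B}_{z}(b,1-a)\right].
\]

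Next I would evaluate $\dfrac{\partial}{\partial a}\mathrm{B}_{z}(b,1-a)=-\left.\dfrac{\partial}{\partial y}\mathrm{B}_{z}(b,y)\right\vert_{y=1-a}$ by invoking the second-parameter derivative (\ref{DbB_resultado}) under the relabeling $a\mapsto b$, $b\mapsto 1-a$. This yields a ${}_{3}F_{2}$ term with parameters $1-b,1-a,1-a$ over $2-a,2-a$ at argument $1-z$ (matching the statement), a term $\ln(1-z)\,\mathrm{B}_{1-z}(1-a,b)$, and a term $\mathrm{B}(b,1-a)\left[\psi(1+b-a)-\psi(1-a)\right]$; the chain-rule sign flip coming from $y=1-a$ is the one place where care is needed. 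Finally I would tidy up using the incomplete-beta reflection (\ref{Beta_reflection}) to write $\mathrm{B}_{z}(b,1-a)=\mathrm{B}(b,1-a)-\mathrm{B}_{1-z}(1-a,b)$, and the digamma reflection (\ref{psi(1-z)-psi(z)}) to convert $\psi(a)-\psi(1-a)$ into $-\pi\cot(\pi a)$; collecting the coefficients of $\mathrm{B}_{1-z}(1-a,b)$, of $\mathrm{B}(b,1-a)$, and of the ${}_{3}F_{2}$ term then gives (\ref{Reduction_a_b_b+1}). The main obstacle is purely bookkeeping — keeping the parameter substitutions in (\ref{DbB_resultado}) straight and not dropping the minus sign induced by $y=1-a$ — rather than anything conceptual.
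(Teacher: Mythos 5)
Your proposal is correct and follows essentially the same route as the paper: differentiating the reduction formula (\ref{2F1_reduction_beta_2}) with respect to $a$, evaluating $\partial_{a}\mathrm{B}_{z}(b,1-a)$ via (\ref{DbB_resultado}) with the relabeling $a\mapsto b$, $b\mapsto 1-a$ and the chain-rule sign from $y=1-a$, and then simplifying with (\ref{Beta_reflection}) and (\ref{psi(1-z)-psi(z)}). The sign bookkeeping you flag is handled the same way in the paper's intermediate step (\ref{Reduction_a_b_b+1_quasi}), so nothing essential is missing.
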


\begin{proof}
On the one hand, applying the ratio test, we see that the sum given in (\ref%
{Reduction_a_b_b+1})\ converges for $\left\vert z\right\vert <1$ and
diverges for $\left\vert z\right\vert >1$. Indeed, taking
\begin{equation*}
c_{k}=\frac{\left( a\right) _{k}\left( b\right) _{k}}{k!\left( b+1\right)
_{k}}\psi \left( a+k\right) z^{k},
\end{equation*}%
and taking into account (\ref{psi(1+z)}), we have%
\begin{eqnarray*}
\lim_{k\rightarrow \infty }\left\vert \frac{c_{k+1}}{c_{k}}\right\vert
&=&\lim_{k\rightarrow \infty }\left\vert \frac{\left( a+k\right) \left(
b+k\right) \psi \left( a+k+1\right) }{\left( k+1\right) \left( b+1+k\right)
\psi \left( a+k\right) }z\right\vert \\
&=&\lim_{k\rightarrow \infty }\left\vert \frac{\left( a+k\right) \left(
b+k\right) }{\left( k+1\right) \left( b+1+k\right) }\left( \frac{1}{\left(
a+k\right) \psi \left( a+k\right) }+1\right) z\right\vert =\left\vert
z\right\vert .
\end{eqnarray*}%
On the other hand, taking into account (\ref{D[(x)_n]}), differentiate the
reduction formula (\ref{2F1_reduction_beta_2}), i.e.%
\begin{equation*}
_{2}F_{1}\left( \left.
\begin{array}{c}
a,b \\
b+1%
\end{array}%
\right\vert z\right) =\sum_{k=0}^{\infty }\frac{\left( a\right) _{k}\left(
b\right) _{k}}{k!\left( b+1\right) _{k}}z^{k}=b\,z^{-b}\,\mathrm{B}%
_{z}\left( b,1-a\right) ,
\end{equation*}%
with respect to the parameter $a$ to obtain,
\begin{equation}
\sum_{k=0}^{\infty }\frac{\left( a\right) _{k}\left( b\right) _{k}}{k!\left(
b+1\right) _{k}}\left[ \psi \left( a+k\right) -\psi \left( a\right) \right]
z^{k}=b\,z^{-b}\frac{\partial }{\partial a}\mathrm{B}_{z}\left( b,1-a\right)
.  \label{Sum_reduction_a_b_b+1}
\end{equation}%
Apply (\ref{2F1_reduction_beta_2})\ on the LHS\ of (\ref%
{Sum_reduction_a_b_b+1}), and (\ref{DbB_resultado})\ on the RHS\ of (\ref%
{Sum_reduction_a_b_b+1})\ to arrive at,%
\begin{eqnarray}
&&\sum_{k=0}^{\infty }\frac{\left( a\right) _{k}\left( b\right) _{k}}{%
k!\left( b+1\right) _{k}}\psi \left( a+k\right) z^{k}
\label{Reduction_a_b_b+1_quasi} \\
&=&b\,z^{-b}\left\{ \ln \left( 1-z\right) \,\mathrm{B}_{1-z}\left(
1-a,b\right) +\mathrm{B}\left( b,1-a\right) \left[ \psi \left( 1+b-a\right)
-\psi \left( 1-a\right) \right] \right.  \notag \\
&&+\left. \psi \left( a\right) \,\mathrm{B}_{z}\left( b,1-a\right) -\frac{%
\left( 1-z\right) ^{1-a}}{\left( 1-a\right) ^{2}}\,_{3}F_{2}\left( \left.
\begin{array}{c}
1-b,1-a,1-a \\
2-a,2-a%
\end{array}%
\right\vert 1-z\right) \right\} .  \notag
\end{eqnarray}%
Finally, apply (\ref{Beta_reflection})\ and (\ref{psi(1-z)-psi(z)})\ in
order to reduce (\ref{Reduction_a_b_b+1_quasi})\ to (\ref{Reduction_a_b_b+1}%
), as we wanted to prove.
\end{proof}

\begin{remark}
It is worth noting that for $z=1$, the sum given in (\ref{Reduction_a_b_b+1}%
)\ can be calculated taking $c=b+1$ in (\ref{Gauss_Psi}), and applying (\ref%
{Gamma_factorial}), (\ref{Beta_property})\ and (\ref{psi(1-z)-psi(z)}), to
obtain:%
\begin{eqnarray*}
&&\sum_{k=0}^{\infty }\frac{\left( a\right) _{k}\left( b\right) _{k}}{%
k!\left( b+1\right) _{k}}\psi \left( b+k\right) =b\,\mathrm{B}\left(
b,1-a\right) \left[ \psi \left( 1+b-a\right) -\pi \cot \left( \pi a\right) %
\right] \,, \\
&&\mathrm{Re\,}a<1.
\end{eqnarray*}
\end{remark}

\begin{corollary}
For $a\neq 1$, and $z\in
%TCIMACRO{\U{2102} }%
%BeginExpansion
\mathbb{C}
%EndExpansion
$, $\left\vert z\right\vert <1$ the following formula holds true:%
\begin{eqnarray}
&&\sum_{k=0}^{\infty }\frac{\left( a\right) _{k}}{\left( k+1\right) !}\psi
\left( a+k\right) z^{k}=\frac{1}{\left( 1-a\right) z}
\label{Reduction_a_k+1} \\
&&\left\{ \left( 1-z\right) ^{1-a}\left[ \ln \left( 1-z\right) -\psi \left(
a\right) +\frac{1}{a-1}\right] +\psi \left( 2-a\right) -\pi \cot \left( \pi
a\right) \right\} .  \notag
\end{eqnarray}
\end{corollary}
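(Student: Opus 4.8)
The plan is to specialize the preceding Theorem, formula (\ref{Reduction_a_b_b+1}), to the case $b=1$. First I would check that the left-hand side collapses correctly: since $(1)_{k}=k!$ and $(2)_{k}=(k+1)!$, we have $\frac{(a)_{k}(1)_{k}}{k!\,(2)_{k}}=\frac{(a)_{k}}{(k+1)!}$, so the left-hand side of (\ref{Reduction_a_b_b+1}) becomes exactly the sum appearing in (\ref{Reduction_a_k+1}); its convergence for $\left\vert z\right\vert <1$ is inherited from the Theorem.

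Next I would evaluate each ingredient of the right-hand side of (\ref{Reduction_a_b_b+1}) at $b=1$. The prefactor $b\,z^{-b}$ becomes $1/z$. By (\ref{Beta_property}) and (\ref{Gamma_factorial}), $\mathrm{B}\left( 1,1-a\right) =\Gamma \left( 1-a\right) /\Gamma \left( 2-a\right) =1/\left( 1-a\right)$, and $\psi \left( 1+b-a\right) =\psi \left( 2-a\right)$. From (\ref{Beta_incomplete_def}) with second parameter equal to $1$, $\mathrm{B}_{1-z}\left( 1-a,1\right) =\int_{0}^{1-z}t^{-a}\,dt=\left( 1-z\right) ^{1-a}/\left( 1-a\right)$, valid first for $\mathrm{Re}\,a<1$ and then for all $a\neq 1$ by analytic continuation. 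Finally, the $_{3}F_{2}$ in (\ref{Reduction_a_b_b+1}) has upper parameter $1-b=0$; since $(0)_{k}=0$ for $k\geq 1$, its series (\ref{pFq_def}) truncates to the single term $k=0$, so this $_{3}F_{2}$ equals $1$.

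Substituting these values, the right-hand side of (\ref{Reduction_a_b_b+1}) becomes
\begin{equation*}
\frac{1}{z}\left\{ \left[ \ln \left( 1-z\right) -\psi \left( a\right) \right] \frac{\left( 1-z\right) ^{1-a}}{1-a}+\left[ \psi \left( 2-a\right) -\pi \cot \left( \pi a\right) \right] \frac{1}{1-a}-\frac{\left( 1-z\right) ^{1-a}}{\left( 1-a\right) ^{2}}\right\} .
\end{equation*}
Factoring $1/\left( 1-a\right)$ and grouping the two terms carrying $\left( 1-z\right) ^{1-a}$, using $-1/\left( 1-a\right) =1/\left( a-1\right)$, produces exactly (\ref{Reduction_a_k+1}). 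Every step is a direct substitution or an elementary simplification, so there is essentially no obstacle; the only points worth a moment's care are recognizing that the vanishing upper parameter truncates the $_{3}F_{2}$ to $1$, and that $\mathrm{B}_{1-z}\left( 1-a,1\right)$ has the stated elementary closed form.
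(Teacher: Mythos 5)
Your proposal is correct and follows the same route as the paper: set $b=1$ in (\ref{Reduction_a_b_b+1}) and use the elementary evaluations $\mathrm{B}_{1-z}\left(1-a,1\right)=\left(1-z\right)^{1-a}/\left(1-a\right)$ and $\mathrm{B}\left(1,1-a\right)=1/\left(1-a\right)$. You merely spell out two details the paper leaves implicit, namely that the left-hand side collapses via $(1)_{k}/\left[k!\,(2)_{k}\right]=1/(k+1)!$ and that the $_{3}F_{2}$ with upper parameter $1-b=0$ truncates to $1$, which is a welcome addition but not a different argument.
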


\begin{proof}
Take $b=1$ in (\ref{Reduction_a_b_b+1}) and consider that for $a\neq 1$ we
have
\begin{eqnarray*}
\,\mathrm{B}_{1-z}\left( 1-a,1\right) &=&\frac{\left( 1-z\right) ^{1-a}}{1-a}%
, \\
\mathrm{B}\left( b,1-a\right) &=&\frac{1}{1-a}.
\end{eqnarray*}
\end{proof}

\section{Sums connected to Bessel functions\label{Section: Bessel}}

If we differenciate the following sum formulas \cite[Eqn. 7.13.1(1)]%
{Prudnikov3}:%
\begin{equation*}
_{0}F_{1}\left( \left.
\begin{array}{c}
- \\
b%
\end{array}%
\right\vert -z\right) =\sum_{k=0}^{\infty }\frac{\left( -z\right) ^{k}}{%
k!\left( b\right) _{k}}=\Gamma \left( b\right) \,z^{\left( 1-b\right)
/2}\,J_{b-1}\left( 2\sqrt{z}\right) ,
\end{equation*}%
and%
\begin{equation*}
_{0}F_{1}\left( \left.
\begin{array}{c}
- \\
b%
\end{array}%
\right\vert z\right) =\sum_{k=0}^{\infty }\frac{z^{k}}{k!\left( b\right) _{k}%
}=\Gamma \left( b\right) \,z^{\left( 1-b\right) /2}\,I_{b-1}\left( 2\sqrt{z}%
\right) ,
\end{equation*}%
with respect to parameter $b$, taking into account (\ref{D[1/(x)_n]}), we
obtain:%
\begin{eqnarray}
&&\sum_{k=0}^{\infty }\frac{\left( -z\right) ^{k}\psi \left( k+b\right) }{%
k!\left( b\right) _{k}}  \label{Sum_J} \\
&=&z^{\left( 1-b\right) /2}\Gamma \left( b\right) \left[ J_{b-1}\left( 2%
\sqrt{z}\right) \ln \sqrt{z}-\frac{\partial J_{b-1}\left( 2\sqrt{z}\right) }{%
\partial b}\right] ,  \notag
\end{eqnarray}%
and%
\begin{eqnarray}
&&\sum_{k=0}^{\infty }\frac{z^{k}\psi \left( k+b\right) }{k!\left( b\right)
_{k}}  \label{Sum_I} \\
&=&z^{\left( 1-b\right) /2}\Gamma \left( b\right) \left[ I_{b-1}\left( 2%
\sqrt{z}\right) \ln \sqrt{z}-\frac{\partial I_{b-1}\left( 2\sqrt{z}\right) }{%
\partial b}\right] ,  \notag
\end{eqnarray}%
which are found in an equivalent form in \cite[Eqns. 55.7.11-12]{Hansen}.
For $b=n\in
%TCIMACRO{\U{2115} }%
%BeginExpansion
\mathbb{N}
%EndExpansion
$, we found in \cite{Miller} closed-form expressions for (\ref{Sum_J}) and (%
\ref{Sum_I}). We can obtain closed-form expressions for other values of $b$
using the following formulas \cite{DerivativesBesselJL} for $\nu \geq 0$, $%
\mathrm{Re\,}z>0$:%
\begin{eqnarray}
\frac{\partial J_{\nu }\left( z\right) }{\partial \nu } &=&\frac{\pi }{2}%
\left[ \frac{Y_{\nu }\left( z\right) \left( z/2\right) ^{2\nu }}{\,\Gamma
^{2}\left( \nu +1\right) }\,_{2}F_{3}\left( \left.
\begin{array}{c}
\nu ,\nu +\frac{1}{2} \\
2\nu +1,\nu +1,\nu +1%
\end{array}%
\right\vert -z^{2}\right) \right.  \label{DJnu_Meijer} \\
&&-\left. \frac{\nu J_{\nu }\left( z\right) }{\sqrt{\pi }}%
G_{2,4}^{3,0}\left( z^{2}\left\vert
\begin{array}{c}
\frac{1}{2},1 \\
0,0,\nu ,-\nu%
\end{array}%
\right. \right) \right] ,  \notag
\end{eqnarray}%
and
\begin{eqnarray}
\frac{\partial I_{\nu }\left( z\right) }{\partial \nu } &=&\frac{-\nu I_{\nu
}\left( z\right) }{2\sqrt{\pi }}G_{2,4}^{3,1}\left( z^{2}\left\vert
\begin{array}{c}
\frac{1}{2},1 \\
0,0,\nu ,-\nu%
\end{array}%
\right. \right)  \label{DInu_Meijer} \\
&&-\frac{K_{\nu }\left( z\right) \left( z/2\right) ^{2\nu }}{\Gamma
^{2}\left( \nu +1\right) }\,_{2}F_{3}\left( \left.
\begin{array}{c}
\nu ,\nu +\frac{1}{2} \\
2\nu +1,\nu +1,\nu +1%
\end{array}%
\right\vert z^{2}\right) .\quad  \notag
\end{eqnarray}

\begin{theorem}
For $b\geq 1$ and $\mathrm{Re\,}z>0$, the following sum holds true:
\begin{eqnarray}
&&\sum_{k=0}^{\infty }\frac{\left( -z\right) ^{k}\psi \left( k+b\right) }{%
k!\left( b\right) _{k}}  \label{Sum_J_resultado} \\
&=&\frac{z^{-\left( 1+b\right) /2}}{8\,\Gamma \left( b\right) }  \notag \\
&&\left\{ \Gamma ^{2}\left( b\right) J_{b-1}\left( 2\sqrt{z}\right) \left[
\sqrt{\pi }\left( b-1\right) G_{2,4}^{3,0}\left( 4z\left\vert
\begin{array}{c}
\frac{3}{2},2 \\
1,1,b,2-b%
\end{array}%
\right. \right) +4z\ln z\right] \right.  \notag \\
&&-\left. 4\pi \,z^{b}Y_{b-1}\left( 2\sqrt{z}\right) \,_{2}F_{3}\left(
\left.
\begin{array}{c}
b-1,b-\frac{1}{2} \\
b,b,2b-1%
\end{array}%
\right\vert -4z\right) \right\} .  \notag
\end{eqnarray}
\end{theorem}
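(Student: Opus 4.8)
The plan is to start from equation~(\ref{Sum_J}), which already reduces the sum to $\ln\sqrt{z}$, the Bessel function $J_{b-1}(2\sqrt{z})$, and the order derivative $\partial_b J_{b-1}(2\sqrt{z})$. Since the argument $2\sqrt{z}$ does not depend on $b$, the chain rule gives $\partial_b J_{b-1}(2\sqrt{z})=\bigl.\partial_\nu J_\nu(w)\bigr\vert_{\nu=b-1,\,w=2\sqrt{z}}$, so it suffices to specialize the closed form~(\ref{DJnu_Meijer}) to $\nu=b-1$ and then replace $z$ by $2\sqrt{z}$. The hypothesis $b\geq 1$ ensures $\nu=b-1\geq 0$, so that~(\ref{DJnu_Meijer}) applies, and $\mathrm{Re}\,z>0$ keeps $2\sqrt{z}$ in the right half-plane on the principal branch.

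Carrying out this substitution, the three $z$-dependent pieces of~(\ref{DJnu_Meijer}) transform as $(z/2)^{2\nu}\mapsto z^{\,b-1}$, the $_2F_3$ argument $-z^2\mapsto -4z$, and the Meijer-$G$ argument $z^2\mapsto 4z$; the parameters of the $_2F_3$ become $b-1,\,b-\tfrac12$ over $2b-1,\,b,\,b$, which after the harmless reordering of the lower parameters coincide with the $_2F_3$ displayed in~(\ref{Sum_J_resultado}).

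The only genuine manipulation is to match the Meijer-$G$ function produced by~(\ref{DJnu_Meijer}), whose parameter vectors after the substitution are $(\tfrac12,1)$ and $(0,0,b-1,1-b)$, with the one in~(\ref{Sum_J_resultado}), whose parameter vectors are $(\tfrac32,2)$ and $(1,1,b,2-b)$: the two differ precisely by adding $1$ to every parameter, so the elementary shift identity $x^{\sigma}G_{p,q}^{m,n}(x\mid a_p; b_q)=G_{p,q}^{m,n}(x\mid a_p+\sigma; b_q+\sigma)$ with $\sigma=1$ turns one into $4z$ times the other. This factor $4z$, together with $\ln\sqrt{z}=\tfrac12\ln z$, is exactly what produces the $4z\ln z$ term inside the brace of~(\ref{Sum_J_resultado}).

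Finally I would collect all terms over the common prefactor $z^{-(1+b)/2}/\bigl(8\,\Gamma(b)\bigr)$, using $z^{(1-b)/2}=z\cdot z^{-(1+b)/2}$ for the terms containing $J_{b-1}$ and $z^{(1-b)/2}z^{\,b-1}=z^{\,b}\cdot z^{-(1+b)/2}$ for the term containing $Y_{b-1}$, and supplying an extra factor $\Gamma(b)$ in numerator and denominator wherever a $\Gamma^2(b)$ is desired; the constants $\tfrac{\pi}{2}$ and $1/\sqrt{\pi}$ from~(\ref{DJnu_Meijer}) together with the shift factor $4z$ combine to give the numerical coefficients $8$, $\sqrt{\pi}$, and $4\pi$ in the asserted identity. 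No individual step is hard; the only place requiring care is performing the substitution $z\mapsto 2\sqrt{z}$ consistently in the power, in the $_2F_3$, and in the Meijer-$G$ at the same time, and keeping track of the $\sigma=1$ shift so that the final constants come out correctly.
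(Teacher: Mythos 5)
Your proposal is correct and follows exactly the paper's route: substitute the closed form (\ref{DJnu_Meijer}) with $\nu=b-1$ and argument $2\sqrt{z}$ into (\ref{Sum_J}), apply the Meijer-$G$ shift identity to absorb the factor $4z$, and collect everything over the prefactor $z^{-(1+b)/2}/(8\,\Gamma(b))$; the constants and parameter lists all check out. The paper states this as a one-line proof, and your write-up simply makes the same computation explicit.
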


\begin{proof}
Calculate (\ref{Sum_J})\ taking into account (\ref{DJnu_Meijer})\ to arrive
at the desired result.
\end{proof}

\begin{theorem}
For $b\geq 1$ and $\mathrm{Re\,}z>0$, the following sum holds true:%
\begin{eqnarray}
&&\sum_{k=0}^{\infty }\frac{z^{k}\psi \left( k+b\right) }{k!\left( b\right)
_{k}}  \label{Sum_I_resultado} \\
&=&\frac{z^{-\left( 1+b\right) /2}}{8\,\sqrt{\pi }\Gamma \left( b\right) }
\notag \\
&&\left\{ \Gamma ^{2}\left( b\right) I_{b-1}\left( 2\sqrt{z}\right) \left[
\left( b-1\right) G_{2,4}^{3,1}\left( 4z\left\vert
\begin{array}{c}
\frac{3}{2},2 \\
1,1,b,2-b%
\end{array}%
\right. \right) +4\sqrt{\pi }z\ln z\right] \right.  \notag \\
&&+\left. 8\sqrt{\pi }\,z^{b}K_{b-1}\left( 2\sqrt{z}\right)
\,_{2}F_{3}\left( \left.
\begin{array}{c}
b-1,b-\frac{1}{2} \\
b,b,2b-1%
\end{array}%
\right\vert 4z\right) \right\} .  \notag
\end{eqnarray}
\end{theorem}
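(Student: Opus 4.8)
The plan is to derive (\ref{Sum_I_resultado}) directly from the general identity (\ref{Sum_I}), which already expresses the sum in terms of $I_{b-1}(2\sqrt{z})$ and its parameter derivative $\partial I_{b-1}(2\sqrt{z})/\partial b$. Since the prefactor $z^{(1-b)/2}\Gamma(b)$ and the term $I_{b-1}(2\sqrt{z})\ln\sqrt{z}$ are already in closed form, the only object that needs to be reduced is $\partial I_{\nu}(w)/\partial\nu$ evaluated at $\nu=b-1$, $w=2\sqrt{z}$. For this I would invoke formula (\ref{DInu_Meijer}), setting $\nu=b-1$ and $z\mapsto 2\sqrt{z}$ there, so that $z^{2}$ in (\ref{DInu_Meijer}) becomes $4z$ and $(z/2)^{2\nu}$ becomes $z^{\,b-1}$.

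The bulk of the work is bookkeeping. First I would substitute into (\ref{DInu_Meijer}) to get
\[
\frac{\partial I_{b-1}(2\sqrt{z})}{\partial b}
=\frac{-(b-1)I_{b-1}(2\sqrt{z})}{2\sqrt{\pi}}\,G_{2,4}^{3,1}\!\left(4z\left\vert\begin{array}{c}\frac12,1\\0,0,b-1,1-b\end{array}\right.\right)
-\frac{K_{b-1}(2\sqrt{z})\,z^{\,b-1}}{\Gamma^{2}(b)}\,{}_{2}F_{3}\!\left(\left.\begin{array}{c}b-1,b-\frac12\\2b-1,b,b\end{array}\right\vert 4z\right).
\]
Then I would plug this and the $\ln\sqrt{z}=\tfrac12\ln z$ term back into (\ref{Sum_I}), collect the overall factor $z^{(1-b)/2}\Gamma(b)$, and multiply through. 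At this stage the Meijer-$G$ in the answer has parameter rows $\tfrac32,2$ over $1,1,b,2-b$ rather than $\tfrac12,1$ over $0,0,b-1,1-b$; these differ by a uniform shift of $+1$ in every parameter, which by the elementary shift rule $z^{\sigma}G_{p,q}^{m,n}(z\,|\,(a_i)\,|\,(b_j)) = G_{p,q}^{m,n}(z\,|\,(a_i+\sigma)\,|\,(b_j+\sigma))$ (with $\sigma=1$) produces exactly a compensating factor, which I would absorb into the $z$-powers so that the stated normalization $z^{-(1+b)/2}/(8\sqrt{\pi}\,\Gamma(b))$ emerges. Finally I would reconcile the powers of $z$ in the $K_{b-1}$ term: the $z^{\,b-1}$ from $(z/2)^{2\nu}$ times the leading $z^{(1-b)/2}$ gives $z^{(b-1)/2}$, and pulling out $z^{-(1+b)/2}/8$ leaves the factor $8\,z^{b}$ shown.

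The main obstacle I anticipate is purely the Meijer-$G$ parameter-shift/normalization juggling: making sure the $G_{2,4}^{3,1}$ with argument $4z$ and rows $\{\tfrac32,2\}$, $\{1,1,b,2-b\}$ in the final formula is genuinely the same as $z^{\text{(something)}}$ times the $G_{2,4}^{3,1}$ coming out of (\ref{DInu_Meijer}), and that the bottom parameters $b-1,1-b$ map to $b,2-b$ correctly under the shift. One must also check the $m,n$ indices $(3,1)$ are preserved under the shift (they are, since shifting all parameters equally does not move any pole across the contour). Everything else — combining the two $\ln z$ contributions into the single $4\sqrt{\pi}\,z\ln z$ inside the bracket, and checking the sign on the $K_{b-1}$ term flips to $+8\sqrt{\pi}$ because of the overall minus in (\ref{Sum_I}) multiplying the minus in (\ref{DInu_Meijer}) — is routine sign and constant tracking. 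As a sanity check one can specialize $b=1$, where $G_{2,4}^{3,1}$ degenerates and the sum must reduce to the known $n\in\mathbb{N}$ case of \cite{Miller}, and compare against the MATHEMATICA verification already referenced in the paper.
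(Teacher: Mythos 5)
Your proposal is correct and follows essentially the same route as the paper, whose proof is precisely the substitution of (\ref{DInu_Meijer}) with $\nu=b-1$ and argument $2\sqrt{z}$ into (\ref{Sum_I}); your bookkeeping, including the uniform parameter shift $z\,G_{2,4}^{3,1}(4z\,|\,\tfrac12,1;0,0,b-1,1-b)=G_{2,4}^{3,1}(4z\,|\,\tfrac32,2;1,1,b,2-b)$ absorbed into the prefactor $z^{-(1+b)/2}/(8\sqrt{\pi}\,\Gamma(b))$, checks out. You merely make explicit a normalization step the paper leaves implicit.
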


\begin{proof}
Calculate (\ref{Sum_I})\ taking into account (\ref{DInu_Meijer})\ to arrive
at the desired result.
\end{proof}

\begin{theorem}
For $b\geq 1$ and $\mathrm{Re\,}z>0$, the following sum holds true:%
\begin{eqnarray}
&&\sum_{k=0}^{\infty }\frac{z^{k}\psi \left( 2k+b\right) }{k!\left( \frac{1}{%
2}\right) _{k}\left( \frac{b}{2}\right) _{k}\left( \frac{b+1}{2}\right) _{k}}
\label{Sum_J+I} \\
&=&\frac{\Gamma \left( b\right) }{2^{b}z^{\left( b-1\right) /4}}  \notag \\
&&\left\{ \ln \left( 2\,z^{1/4}\right) \left[ J_{b-1}\left(
4\,z^{1/4}\right) +I_{b-1}\left( 4\,z^{1/4}\right) \right] -\frac{\partial
J_{b-1}\left( 4\,z^{1/4}\right) }{\partial b}-\frac{\partial I_{b-1}\left(
4\,z^{1/4}\right) }{\partial b}\right\} ,  \notag
\end{eqnarray}%
where the order derivatives of the Bessel functions are calculated using (%
\ref{DJnu_Meijer})\ and (\ref{DInu_Meijer}).
\end{theorem}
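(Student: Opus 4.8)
The plan is to collapse the triple‑Pochhammer series in (\ref{Sum_J+I}) onto the single‑Pochhammer sums already evaluated in (\ref{Sum_J}) and (\ref{Sum_I}), by means of the duplication formula for the Pochhammer symbol together with an even/odd splitting. First I would record the two consequences of (\ref{Gamma_duplication}) (equivalently of (\ref{Pochhammer_def})):
\[
(2k)! = 2^{2k}\,k!\,\left(\tfrac{1}{2}\right)_{k},
\qquad
(b)_{2k} = 2^{2k}\,\left(\tfrac{b}{2}\right)_{k}\left(\tfrac{b+1}{2}\right)_{k}.
\]
Multiplying these gives $k!\left(\tfrac12\right)_{k}\left(\tfrac b2\right)_{k}\left(\tfrac{b+1}{2}\right)_{k} = 16^{-k}(2k)!\,(b)_{2k}$, so the left‑hand side of (\ref{Sum_J+I}) is precisely $\sum_{k=0}^{\infty}\dfrac{(16z)^{k}\,\psi(2k+b)}{(2k)!\,(b)_{2k}}$. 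Since this series is entire in $z$ there is no convergence issue.

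Next, set $\Sigma(w)=\sum_{n=0}^{\infty}\dfrac{w^{n}\,\psi(n+b)}{n!\,(b)_{n}}$, which is exactly the sum occurring in (\ref{Sum_I}), while (\ref{Sum_J}) evaluates $\Sigma(-w)$. Then $\tfrac12\bigl[\Sigma(w)+\Sigma(-w)\bigr]$ retains only the even‑index terms, i.e.\ it equals $\sum_{k=0}^{\infty}\dfrac{w^{2k}\,\psi(2k+b)}{(2k)!\,(b)_{2k}}$. Choosing $w=4\sqrt{z}$ (so $w^{2}=16z$, with the principal branch, legitimate since $\mathrm{Re}\,z>0$) this coincides with the series obtained above, so the left‑hand side of (\ref{Sum_J+I}) equals $\tfrac12\bigl[\Sigma(4\sqrt z)+\Sigma(-4\sqrt z)\bigr]$.

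It then remains to substitute $w=4\sqrt z$ into the closed forms (\ref{Sum_I}) and (\ref{Sum_J}) and add. Using $2\sqrt{w}=4z^{1/4}$, $\sqrt{w}=2z^{1/4}$ and $w^{(1-b)/2}=2^{1-b}z^{(1-b)/4}$, the overall prefactor collapses via $\tfrac12\cdot 2^{1-b}=2^{-b}$, and the bracketed terms combine into $\ln(2z^{1/4})\bigl[J_{b-1}(4z^{1/4})+I_{b-1}(4z^{1/4})\bigr]-\partial_{b}J_{b-1}(4z^{1/4})-\partial_{b}I_{b-1}(4z^{1/4})$, which is exactly the right‑hand side of (\ref{Sum_J+I}); note that $w$ is independent of $b$, so no chain rule enters the order derivatives. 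Finally, for $b\ge 1$ (hence $\nu=b-1\ge 0$) and $\mathrm{Re}\,z>0$ (hence $\mathrm{Re}(4z^{1/4})>0$), I would invoke (\ref{DJnu_Meijer}) and (\ref{DInu_Meijer}) to rewrite $\partial_b J_{b-1}$ and $\partial_b I_{b-1}$ in closed form.

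I do not expect any genuine obstacle: the statement is a reduction of results proved earlier in the paper. The only points requiring a little care are verifying the two Pochhammer duplication identities from (\ref{Gamma_duplication}) and keeping the fractional powers of $z$ and their principal branches consistent throughout; everything else is routine algebra.
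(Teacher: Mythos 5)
Your proposal is correct and is essentially the paper's own proof: the paper likewise adds (\ref{Sum_J}) and (\ref{Sum_I}) and uses the duplication formula (\ref{Gamma_duplication}) (equivalently your Pochhammer identities $(2k)!=2^{2k}k!\left(\tfrac12\right)_k$ and $(b)_{2k}=2^{2k}\left(\tfrac b2\right)_k\left(\tfrac{b+1}{2}\right)_k$) to extract the even part, with the substitution $w=4\sqrt z$ producing the argument $4z^{1/4}$ and the prefactor $\Gamma(b)/(2^{b}z^{(b-1)/4})$. You have merely written out explicitly the bookkeeping that the paper leaves implicit, and your branch and prefactor checks are accurate.
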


\begin{proof}
Sum up (\ref{Sum_J})\ and (\ref{Sum_I}), using the duplication formula of
the gamma function (\ref{Gamma_duplication}), to arrrive at the desired
result.
\end{proof}

\begin{theorem}
For $b\geq 1$ and $\mathrm{Re\,}z>0$, the following sum holds true:%
\begin{eqnarray}
&&\sum_{k=0}^{\infty }\frac{z^{k}\psi \left( 2k+b\right) }{k!\left( \frac{3}{%
2}\right) _{k}\left( \frac{b}{2}\right) _{k}\left( \frac{b+1}{2}\right) _{k}}
\label{Sum_J-I} \\
&=&\frac{\Gamma \left( b\right) }{2^{b+1}z^{b/4}}  \notag \\
&&\left\{ \ln \left( 2\,z^{1/4}\right) \left[ I_{b-2}\left(
4\,z^{1/4}\right) -J_{b-2}\left( 4\,z^{1/4}\right) \right] -\frac{\partial
I_{b-2}\left( 4\,z^{1/4}\right) }{\partial b}+\frac{\partial J_{b-2}\left(
4\,z^{1/4}\right) }{\partial b}\right\} ,  \notag
\end{eqnarray}%
where the order derivatives of the Bessel functions are calculated using (%
\ref{DJnu_Meijer})\ and (\ref{DInu_Meijer}).
\end{theorem}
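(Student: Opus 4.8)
The plan is to follow the proof of the preceding theorem almost verbatim, only replacing the \emph{sum} of the two elementary Bessel sums (\ref{Sum_J}) and (\ref{Sum_I}) by their \emph{difference}, together with a unit shift of the parameter. Write $\zeta$ for the summation variable and $\beta$ for the parameter in (\ref{Sum_J})--(\ref{Sum_I}), and subtract (\ref{Sum_J}) from (\ref{Sum_I}). On the left-hand side the even-index terms cancel and the odd-index terms double,
\[
\sum_{k=0}^{\infty}\frac{\zeta^{k}-(-\zeta)^{k}}{k!\,(\beta)_{k}}\,\psi(k+\beta)=2\sum_{m=0}^{\infty}\frac{\zeta^{2m+1}\,\psi(2m+1+\beta)}{(2m+1)!\,(\beta)_{2m+1}},
\]
while on the right-hand side one is left with $\zeta^{(1-\beta)/2}\Gamma(\beta)$ multiplying $[I_{\beta-1}(2\sqrt{\zeta})-J_{\beta-1}(2\sqrt{\zeta})]\ln\sqrt{\zeta}-\partial_{\beta}[I_{\beta-1}(2\sqrt{\zeta})-J_{\beta-1}(2\sqrt{\zeta})]$.

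Next I would split the odd factorial and the odd Pochhammer symbol by means of the duplication formula (\ref{Gamma_duplication}); explicitly, $(2m+1)!=4^{m}\,m!\,\left(\frac{3}{2}\right)_{m}$ and $(\beta)_{2m+1}=\beta\,4^{m}\left(\frac{\beta+1}{2}\right)_{m}\left(\frac{\beta+2}{2}\right)_{m}$, so that the right-hand series above becomes
\[
\frac{2\zeta}{\beta}\sum_{m=0}^{\infty}\frac{\left(\zeta^{2}/16\right)^{m}\,\psi(2m+1+\beta)}{m!\,\left(\frac{3}{2}\right)_{m}\left(\frac{\beta+1}{2}\right)_{m}\left(\frac{\beta+2}{2}\right)_{m}}.
\]
I would then specialize $\beta=b-1$ and $\zeta=4\sqrt{z}$, so that $\zeta^{2}/16=z$, $\sqrt{\zeta}=2z^{1/4}$, $2\sqrt{\zeta}=4z^{1/4}$, $\psi(2m+1+\beta)=\psi(2m+b)$, $\left(\frac{\beta+1}{2}\right)_{m}=\left(\frac{b}{2}\right)_{m}$, $\left(\frac{\beta+2}{2}\right)_{m}=\left(\frac{b+1}{2}\right)_{m}$, and $\zeta^{(1-\beta)/2}=2^{2-b}z^{(2-b)/4}$; at this point the series that appears is precisely the left-hand side of (\ref{Sum_J-I}).

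It then remains to solve for that series and tidy up the constants: using $\Gamma(b-1)=\Gamma(b)/(b-1)$ one gets $\frac{b-1}{8}\cdot 2^{2-b}\cdot\frac{\Gamma(b)}{b-1}=\frac{\Gamma(b)}{2^{b+1}}$, while $z^{(2-b)/4}/\sqrt{z}=z^{-b/4}$, and these collapse the right-hand side to exactly (\ref{Sum_J-I}), the leftover order derivatives $\partial_{b}J_{b-2}(4z^{1/4})$ and $\partial_{b}I_{b-2}(4z^{1/4})$ being expressible, if desired, through (\ref{DJnu_Meijer}) and (\ref{DInu_Meijer}). I expect the only real difficulty to be clerical: one must carry the odd-index extraction, the factorial and Pochhammer splittings, the shift $b\mapsto b-1$ and the substitution $\zeta=4\sqrt{z}$ through the computation simultaneously without misplacing a power of $2$ or of $z$; the boundary value $b=1$ (where the intermediate formulas have $\beta=0$) is then recovered by continuity in $b$, no new convergence question arising beyond those already settled for (\ref{Sum_J}) and (\ref{Sum_I}) when $\mathrm{Re}\,z>0$.
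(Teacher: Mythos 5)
Your proposal is correct and follows essentially the same route as the paper: subtract the two basic sums (\ref{Sum_J}) and (\ref{Sum_I}) so that only the odd-index terms survive, split $(2m+1)!$ and $(\beta)_{2m+1}$ via the duplication formula (\ref{Gamma_duplication}), and specialize $\beta=b-1$, $\zeta=4\sqrt{z}$; your bookkeeping of the powers of $2$ and $z$ checks out and reproduces (\ref{Sum_J-I}) exactly. You merely supply the details (odd-term extraction, the shift $b\mapsto b-1$, and the correct orientation of the subtraction, namely (\ref{Sum_I}) minus (\ref{Sum_J}), which gives the stated signs) that the paper's one-line proof leaves implicit.
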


\begin{proof}
Substract (\ref{Sum_I})\ from (\ref{Sum_J}), and apply again (\ref%
{Gamma_duplication}), to arrrive at the desired result.
\end{proof}

\section{Application to the parameter derivative of some special functions
\label{Section: Application}}

\subsection{Application to the derivative of the Wright function with
respect to the parameters}

The Wright function is defined as \cite[Eqn. 10.46.1]{NIST}:%
\begin{equation*}
\mathrm{W}_{\alpha ,\beta }\left( z\right) =\sum_{k=0}^{\infty }\frac{%
\,\,z^{k}}{k!\,\Gamma \left( \alpha k+\beta \right) },\quad \alpha >-1,
\end{equation*}%
thus,
\begin{eqnarray}
\frac{\partial \mathrm{W}_{\alpha ,\beta }\left( z\right) }{\partial \alpha }
&=&-\sum_{k=1}^{\infty }\frac{\,k\,z^{k}\psi \left( \alpha k+\beta \right) }{%
k!\,\Gamma \left( \alpha k+\beta \right) },  \label{DaW} \\
\frac{\partial \mathrm{W}_{\alpha ,\beta }\left( z\right) }{\partial \beta }
&=&-\sum_{k=0}^{\infty }\frac{\,z^{k}\psi \left( \alpha k+\beta \right) }{%
k!\,\Gamma \left( \alpha k+\beta \right) },  \label{DbW}
\end{eqnarray}%
and the following equation is satisfied:%
\begin{equation}
\frac{\partial \mathrm{W}_{\alpha ,\beta }\left( z\right) }{\partial \alpha }%
=z\frac{\partial }{\partial z}\left( \frac{\partial \mathrm{W}_{\alpha
,\beta }\left( z\right) }{\partial \beta }\right) .  \label{DaW=z*DzDbW}
\end{equation}

In \cite{Apelblat3}, we found some reduction formulas for the first
derivative of the Wright function with respect to the parameters for
particular values of $\alpha $ and $\beta $. Next, we extend these reduction
formulas. For this purpose, apply (\ref{Sum_I_resultado}) to arrive at the
following result:

\begin{theorem}
For $\beta \geq 1\,$and $\mathrm{Re\,}z>0$, we have \
\begin{eqnarray}
&&\left. \frac{\partial \mathrm{W}_{\alpha ,\beta }\left( z\right) }{%
\partial \beta }\right\vert _{\alpha =1}  \label{DbWa1b} \\
&=&z^{-\left( 1+\beta \right) /2}\left\{ I_{\beta -1}\left( 2\sqrt{z}\right) %
\left[ \frac{1-\beta }{8\sqrt{\pi }}G_{2,4}^{3,1}\left( 4z\left\vert
\begin{array}{c}
3/2,2 \\
1,1,\beta ,2-\beta%
\end{array}%
\right. \right) -\frac{z}{2}\ln z\right] \right.  \notag \\
&&\qquad -\left. \frac{z^{\beta }}{\Gamma ^{2}\left( \beta \right) }K_{\beta
-1}\left( 2\sqrt{z}\right) \,_{2}F_{3}\left( \left.
\begin{array}{c}
\beta -1,\beta -\frac{1}{2} \\
\beta ,\beta ,2\beta -1%
\end{array}%
\right\vert 4z\right) \right\} .  \notag
\end{eqnarray}
\end{theorem}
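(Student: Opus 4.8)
The plan is to obtain (\ref{DbWa1b}) as a direct specialization of the closed form (\ref{Sum_I_resultado}). Starting from the parameter derivative (\ref{DbW}) evaluated at $\alpha=1$, we have
\[
\left.\frac{\partial\mathrm{W}_{\alpha,\beta}(z)}{\partial\beta}\right|_{\alpha=1}=-\sum_{k=0}^{\infty}\frac{z^{k}\,\psi(k+\beta)}{k!\,\Gamma(k+\beta)}.
\]
First I would invoke the definition of the Pochhammer symbol (\ref{Pochhammer_def}) in the guise $\Gamma(k+\beta)=\Gamma(\beta)\,(\beta)_{k}$, so that $1/\Gamma(k+\beta)=1/\bigl(\Gamma(\beta)\,(\beta)_{k}\bigr)$ and hence
\[
\left.\frac{\partial\mathrm{W}_{\alpha,\beta}(z)}{\partial\beta}\right|_{\alpha=1}=-\frac{1}{\Gamma(\beta)}\sum_{k=0}^{\infty}\frac{z^{k}\,\psi(k+\beta)}{k!\,(\beta)_{k}}.
\]
The series on the right is precisely the left-hand side of (\ref{Sum_I_resultado}) with $b=\beta$; moreover the hypotheses $\beta\ge 1$ and $\mathrm{Re}\,z>0$ of the present statement are exactly those under which (\ref{Sum_I_resultado}) was established, so the substitution is legitimate.

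It then remains to insert (\ref{Sum_I_resultado}) (with $b=\beta$) into the last display and carry out the elementary bookkeeping of prefactors. Pulling the factor $-1/\Gamma(\beta)$ into the already-present $z^{-(1+\beta)/2}/\bigl(8\sqrt{\pi}\,\Gamma(\beta)\bigr)$ produces the common factor $-z^{-(1+\beta)/2}/\bigl(8\sqrt{\pi}\,\Gamma^{2}(\beta)\bigr)$, which is then distributed over the two terms inside the braces of (\ref{Sum_I_resultado}). In the $I_{\beta-1}$-term the $\Gamma^{2}(\beta)$ cancels, the coefficient $-1/(8\sqrt{\pi})$ turns $(\beta-1)$ into $(1-\beta)/(8\sqrt{\pi})$ and sends $4\sqrt{\pi}\,z\ln z$ to $-\tfrac12\,z\ln z$; in the $K_{\beta-1}$-term the numerical factor $8\sqrt{\pi}$ cancels, leaving $-z^{-(1+\beta)/2}z^{\beta}/\Gamma^{2}(\beta)$ in front of $K_{\beta-1}(2\sqrt{z})\,{}_{2}F_{3}(\cdots)$. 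Factoring $z^{-(1+\beta)/2}$ back out of both contributions reproduces (\ref{DbWa1b}) verbatim.

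I expect no genuine obstacle here: the theorem is essentially a corollary of (\ref{Sum_I_resultado}), and the only points requiring a modicum of care are the algebra of the $z$-powers (the identity $z^{-(1+\beta)/2}\cdot z=z^{(1-\beta)/2}$ underlying the $\ln z$ term) and the matching of parameter ranges. As an optional internal consistency check one could instead differentiate the generating identity for $I_{b-1}$ directly, using (\ref{Sum_I}) together with the Meijer-$G$ form (\ref{DInu_Meijer}) of $\partial I_{\nu}/\partial\nu$, and verify that the two routes agree; but this is not needed for the proof.
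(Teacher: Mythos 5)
Your proposal is correct and follows exactly the paper's route: the paper likewise obtains (\ref{DbWa1b}) by writing $\Gamma(k+\beta)=\Gamma(\beta)(\beta)_{k}$ in (\ref{DbW}) at $\alpha=1$ and applying (\ref{Sum_I_resultado}) with $b=\beta$. Your prefactor bookkeeping reproduces the stated formula verbatim, so nothing further is needed.
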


\begin{remark}
It is worth noting that for $\beta =1$, Eqn. (\ref{DbWa1b})\ is reduced to
\begin{equation*}
\left. \frac{\partial \mathrm{W}_{\alpha ,\beta }\left( z\right) }{\partial
\beta }\right\vert _{\alpha =\beta =1}=-\frac{1}{2}\ln z\,I_{0}\left( 2\sqrt{%
z}\right) -K_{0}\left( 2\sqrt{z}\right) ,
\end{equation*}%
which is found in \cite{Apelblat3}.
\end{remark}

Further, from (\ref{DaW=z*DzDbW})\ and (\ref{DbWa1b}) and with the aid of
MATHEMATICA program, we arrive at the following result:

\begin{theorem}
For $\beta \geq 1\,$and $\mathrm{Re\,}z>0$, we have%
\begin{eqnarray}
&&\left. \frac{\partial \mathrm{W}_{\alpha ,\beta }\left( z\right) }{%
\partial \alpha }\right\vert _{\alpha =1}  \label{DaWa1b} \\
&=&\frac{z^{-\left( \beta +1\right) /2}}{2}\left\{ \frac{\beta -1}{8\sqrt{%
\pi }}\left\{ \left( \beta -1\right) I_{\beta -1}\left( 2\sqrt{z}\right) -%
\sqrt{z}\left[ I_{\beta -2}\left( 2\sqrt{z}\right) +I_{\beta }\left( 2\sqrt{z%
}\right) \right] \right\} G_{2,4}^{3,1}\left( 4z\left\vert
\begin{array}{c}
3/2,2 \\
1,1,\beta ,2-\beta%
\end{array}%
\right. \right) \right.  \notag \\
&&+\frac{z^{\beta }}{\Gamma ^{2}\left( \beta \right) }\left\{ \left( \beta
-1\right) K_{\beta -1}\left( 2\sqrt{z}\right) +\sqrt{z}\left[ K_{\beta
-2}\left( 2\sqrt{z}\right) +K_{\beta }\left( 2\sqrt{z}\right) \right]
\right\} \,_{2}F_{3}\left( \left.
\begin{array}{c}
\beta -1,\beta -\frac{1}{2} \\
\beta ,\beta ,2\beta -1%
\end{array}%
\right\vert 4z\right)  \notag \\
&&+\frac{I_{\beta -1}\left( 2\sqrt{z}\right) }{4\sqrt{\pi }}\left[ 2\sqrt{%
\pi }z\left[ \left( \beta -1\right) \ln z-2\right] +\left( \beta -1\right)
G_{1,3}^{2,1}\left( 4z\left\vert
\begin{array}{c}
3/2 \\
1,\beta ,2-\beta%
\end{array}%
\right. \right) \right]  \notag \\
&&-\left. \frac{z^{3/2}\ln z}{2}\left[ I_{\beta -2}\left( 2\sqrt{z}\right)
+I_{\beta }\left( 2\sqrt{z}\right) \right] +\frac{2\left( 1-\beta \right)
z^{\beta }}{\Gamma ^{2}\left( \beta \right) }K_{\beta -1}\left( 2\sqrt{z}%
\right) \,_{1}F_{2}\left( \left.
\begin{array}{c}
\beta -\frac{1}{2} \\
\beta ,2\beta -1%
\end{array}%
\right\vert 4z\right) \right\} .  \notag
\end{eqnarray}
\end{theorem}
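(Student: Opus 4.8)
The plan is to combine the closed form (\ref{DbWa1b}) with the reduction‑of‑order relation (\ref{DaW=z*DzDbW}). Since $\mathrm{W}_{\alpha ,\beta }\left( z\right) $ is jointly analytic in its three arguments for $\alpha >-1$, $\mathrm{Re}\,z>0$, the operators $z\,\partial _{z}$ and $\partial _{\beta }$ commute with the substitution $\alpha =1$, so restricting (\ref{DaW=z*DzDbW}) to the slice $\alpha =1$ gives
\begin{equation*}
\left. \frac{\partial \mathrm{W}_{\alpha ,\beta }\left( z\right) }{\partial \alpha }\right\vert _{\alpha =1}=z\,\frac{\partial }{\partial z}\left( \left. \frac{\partial \mathrm{W}_{\alpha ,\beta }\left( z\right) }{\partial \beta }\right\vert _{\alpha =1}\right) .
\end{equation*}
Hence (\ref{DaWa1b}) follows by differentiating the right‑hand side of (\ref{DbWa1b}) with respect to $z$ and multiplying by $z$. (Equivalently, by (\ref{Sum_I}) with $b=\beta $ the inner derivative equals $-z^{\left( 1-\beta \right) /2}\bigl[ \ln \sqrt{z}\,I_{\beta -1}\left( 2\sqrt{z}\right) -\partial _{\nu }I_{\nu }\left( 2\sqrt{z}\right) \big|_{\nu =\beta -1}\bigr] $, and one may differentiate that form and then insert (\ref{DInu_Meijer}); both routes lead to the same computation.)

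I would then differentiate (\ref{DbWa1b}) term by term. The outer factor $z^{-\left( 1+\beta \right) /2}$ contributes, by the product rule, a multiple of $z^{-\left( 1+\beta \right) /2}$ times the original brace --- this is the origin of the ``bare'' $I_{\beta -1}$- and $K_{\beta -1}$-terms of (\ref{DaWa1b}) carrying $\left( \beta -1\right) \ln z$ and the constant $-2$. For the factors inside the brace I would use: the chain rule together with $2I_{\nu }^{\prime }\left( w\right) =I_{\nu -1}\left( w\right) +I_{\nu +1}\left( w\right) $ and $2K_{\nu }^{\prime }\left( w\right) =-\left( K_{\nu -1}\left( w\right) +K_{\nu +1}\left( w\right) \right) $, which give
\begin{equation*}
\frac{d}{dz}I_{\beta -1}\left( 2\sqrt{z}\right) =\frac{I_{\beta -2}\left( 2\sqrt{z}\right) +I_{\beta }\left( 2\sqrt{z}\right) }{2\sqrt{z}},\qquad \frac{d}{dz}K_{\beta -1}\left( 2\sqrt{z}\right) =-\frac{K_{\beta -2}\left( 2\sqrt{z}\right) +K_{\beta }\left( 2\sqrt{z}\right) }{2\sqrt{z}},
\end{equation*}
already producing the combinations $I_{\beta -2}+I_{\beta }$ and $K_{\beta -2}+K_{\beta }$ seen in (\ref{DaWa1b}); the elementary identity $\frac{d}{dz}\left( z\ln z\right) =\ln z+1$; the termwise rule $\frac{d}{dz}{}_{p}F_{q}\left( \mathbf{a};\mathbf{b};z\right) =\bigl(\prod a_{i}/\prod b_{j}\bigr)\,{}_{p}F_{q}\left( \mathbf{a}+\mathbf{1};\mathbf{b}+\mathbf{1};z\right) $; and the differentiation and contiguity formulas for the Meijer-$G$ function (DLMF \S16.19). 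For the parameter pattern at hand the last two collapse pleasantly: the function $f\left( z\right) ={}_{2}F_{3}\left( \beta -1,\beta -\tfrac{1}{2};\beta ,\beta ,2\beta -1;4z\right) $ satisfies $\left( \beta -1\right) f+z\,f^{\prime }=\left( \beta -1\right) g$ with $g\left( z\right) ={}_{1}F_{2}\left( \beta -\tfrac{1}{2};\beta ,2\beta -1;4z\right) $ (immediate on comparing power‑series coefficients, since the ratio of the $k$-th terms is $\left( \beta -1\right) /\left( \beta -1+k\right) $), whence $\frac{d}{dz}\left( z^{\beta -1}f\right) =\left( \beta -1\right) z^{\beta -2}g$; this is where the ${}_{1}F_{2}$ of (\ref{DaWa1b}) comes from. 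Likewise, differentiating the $G_{2,4}^{3,1}$-term of (\ref{DbWa1b}) leaves the same $G_{2,4}^{3,1}$ (now dressed by new Bessel and power factors) and, via the order‑reduction rule, supplies the $G_{1,3}^{2,1}$ of (\ref{DaWa1b}).

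Inserting all of these, multiplying by $z$ and regrouping, every Bessel term can be reduced through the recurrences above to the basis $\left\{ I_{\beta -1},\,I_{\beta -2}+I_{\beta }\right\} $, $\left\{ K_{\beta -1},\,K_{\beta -2}+K_{\beta }\right\} $, after which one collects separately the coefficients of $\ln z$, of the $G$-functions, and of the hypergeometric functions and matches them against (\ref{DaWa1b}). I expect the main obstacle to be purely organizational: roughly a dozen products of a Bessel function with a $G$-function, a ${}_{p}F_{q}$, or a logarithm have to be combined without error, which is exactly why the statement is accompanied by a MATHEMATICA verification; the only genuinely non‑routine algebraic inputs are the order‑reduction identity for the $G$-function and the ${}_{2}F_{3}\to{}_{1}F_{2}$ relation above, everything else being the product rule and Bessel recurrences. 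As a consistency test I would set $\beta =1$: every $\left( \beta -1\right) $-multiple drops (in particular both $G$-terms and the ${}_{1}F_{2}$-term), ${}_{2}F_{3}\left( 0,\tfrac{1}{2};1,1,1;4z\right) =1$, and (\ref{DaWa1b}) reduces to $-\tfrac{1}{2}I_{0}\left( 2\sqrt{z}\right) -\tfrac{\sqrt{z}}{2}\ln z\,I_{1}\left( 2\sqrt{z}\right) +\sqrt{z}\,K_{1}\left( 2\sqrt{z}\right) $, which is precisely $z\,\partial _{z}$ of the $\beta =1$ expression displayed in the Remark following (\ref{DbWa1b}).
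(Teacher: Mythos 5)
Your proposal is correct and follows essentially the same route as the paper, which obtains (\ref{DaWa1b}) precisely by applying the relation (\ref{DaW=z*DzDbW}) to the closed form (\ref{DbWa1b}) (the paper delegates the resulting bookkeeping to MATHEMATICA, whereas you spell out the needed Bessel recurrences, the ${}_{2}F_{3}\rightarrow{}_{1}F_{2}$ contiguity relation, and the Meijer-$G$ differentiation by hand). Your $\beta=1$ consistency check is also sound: it agrees with $z\,\partial_{z}$ applied to the $\beta=1$ case of (\ref{DbWa1b}), and with the theorem itself specialized to $\beta=1$.
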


\begin{remark}
It is worth noting that for $\beta =1$, Eqn. (\ref{DaWa1b})\ is reduced to
\begin{equation*}
\left. \frac{\partial \mathrm{W}_{\alpha ,\beta }\left( z\right) }{\partial
\beta }\right\vert _{\alpha =\beta =1}=\frac{\sqrt{z}\left[ K_{1}\left( 2%
\sqrt{z}\right) -\ln z\,I_{1}\left( 2\sqrt{z}\right) \right] -I_{0}\left( 2%
\sqrt{z}\right) }{2},
\end{equation*}%
which is also found in \cite{Apelblat3}.
\end{remark}

\subsection{Application to the derivative of the Mittag-Leffler function
with respect to the parameters}

The two-parameter Mittag-Leffler function is defined as \cite[Eqn. 10.46.3]%
{NIST}:%
\begin{equation}
\mathrm{E}_{\alpha ,\beta }\left( z\right) =\sum_{k=0}^{\infty }\frac{\,z^{k}%
}{\Gamma \left( \alpha k+\beta \right) },\quad \alpha >0,  \label{ML_def}
\end{equation}%
thus,
\begin{eqnarray}
\frac{\partial \mathrm{E}_{\alpha ,\beta }\left( z\right) }{\partial \alpha }
&=&-\sum_{k=0}^{\infty }\frac{k\,z^{k}\psi \left( \alpha k+\beta \right) }{%
\Gamma \left( \alpha k+\beta \right) },  \label{DaML_series} \\
\frac{\partial \mathrm{E}_{\alpha ,\beta }\left( z\right) }{\partial \beta }
&=&-\sum_{k=0}^{\infty }\frac{\,z^{k}\psi \left( \alpha k+\beta \right) }{%
\Gamma \left( \alpha k+\beta \right) },  \label{DbML_series}
\end{eqnarray}%
and%
\begin{equation}
\frac{\partial \mathrm{E}_{\alpha ,\beta }\left( z\right) }{\partial \alpha }%
=z\frac{\partial }{\partial z}\left( \frac{\partial \mathrm{E}_{\alpha
,\beta }\left( z\right) }{\partial \beta }\right) .  \label{DaML=zDz[DbML]}
\end{equation}

For this purpose, consider the following functions:

\begin{definition}
According to \cite[Eqn. 6.2.1(63)]{Brychov}, define
\begin{eqnarray}
&&Q\left( a,t\right) =\sum_{k=0}^{\infty }\frac{t^{k}}{\left( a\right) _{k}}%
\psi \left( k+a\right)  \label{Q(t,a)_def} \\
&=&\psi \left( a\right) +e^{t}\left[ t^{1-a}\psi \left( a\right) \gamma
\left( a,t\right) +\frac{t}{a^{2}}\,_{2}F_{2}\left( \left.
\begin{array}{c}
a,a \\
a+1,a+1%
\end{array}%
\right\vert -t\right) \right] ,  \notag
\end{eqnarray}%
thus%
\begin{eqnarray}
&&P\left( a,t\right) =\frac{\partial Q\left( a,t\right) }{\partial t}
\label{P(t,a)_def} \\
&=&\psi \left( a\right) +e^{t}  \notag \\
&&\left\{ \frac{t-a+1}{a^{2}}\,_{2}F_{2}\left( \left.
\begin{array}{c}
a,a \\
a+1,a+1%
\end{array}%
\right\vert -t\right) +t^{-a}\gamma \left( a,t\right) \left[ 1+\left(
t-a+1\right) \psi \left( a\right) \right] \right\} .  \notag
\end{eqnarray}
\end{definition}

In \cite{Apelblat3}, we found some reduction formulas of the first
derivative of the Mittag-Leffler function with respect to the parameters for
particular values of $\alpha $ and $\beta $. In particular, we found for $%
q=1,2,\ldots $ that%
\begin{eqnarray}
&&\left. \frac{\partial \mathrm{E}_{\alpha ,\beta }\left( z\right) }{%
\partial \alpha }\right\vert _{\alpha =1/q}  \label{DaML_1/q} \\
&=&-\sum_{h=0}^{q-1}\frac{h\left[ \psi \left( \frac{h}{q}+\beta \right) +%
\tilde{Q}\left( \frac{h}{q}+\beta ,z^{q}\right) +q\,z^{q}\,P\left( \frac{h}{q%
}+\beta ,z^{q}\right) \right] }{\Gamma \left( \frac{h}{q}+\beta \right) }%
z^{h},  \notag
\end{eqnarray}%
and%
\begin{equation}
\left. \frac{\partial \mathrm{E}_{\alpha ,\beta }\left( z\right) }{\partial
\beta }\right\vert _{\alpha =1/q}=-\sum_{h=0}^{q-1}\frac{\psi \left( \frac{h%
}{q}+\beta \right) +\tilde{Q}\left( \frac{h}{q}+\beta ,z^{q}\right) }{\Gamma
\left( \frac{h}{q}+\beta \right) }z^{h},  \label{DbML_1/q}
\end{equation}%
where%
\begin{equation*}
\tilde{Q}\left( a,t\right) =Q\left( a,t\right) -\psi \left( a\right) .
\end{equation*}

Next, we extend these reduction formulas to other values of the parameters.
For this purpose, consider the following lemma.

\begin{lemma}
\label{Lemma_sum}For $n=1,2,\ldots $, the following sum identity holds true:%
\begin{equation}
\sum_{k=0}^{\infty }a_{nk}=\sum_{k=0}^{\infty }\theta _{n,k}\,a_{k},
\label{sum_a_nk}
\end{equation}%
where
\begin{equation}
\theta _{n,k}=\frac{1}{n}\sum_{m=1}^{n}\exp \left( \frac{2\pi \,i\,m\,k}{n}%
\right) .  \label{theta_n,k_def}
\end{equation}
\end{lemma}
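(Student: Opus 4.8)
The plan is to recognize $\theta_{n,k}$ as the classical indicator for divisibility by $n$, coming from summing the $n$-th roots of unity. The key observation is the orthogonality relation
\begin{equation*}
\frac{1}{n}\sum_{m=1}^{n}\exp\left(\frac{2\pi\,i\,m\,k}{n}\right)=
\begin{cases}
1, & n\mid k,\\
0, & n\nmid k,
\end{cases}
\end{equation*}
which follows immediately from the finite geometric series: if $n\nmid k$ then $\zeta:=\exp(2\pi i k/n)\neq 1$, so $\sum_{m=1}^{n}\zeta^{m}=\zeta\,(\zeta^{n}-1)/(\zeta-1)=0$ because $\zeta^{n}=1$; and if $n\mid k$ then each summand equals $1$, giving the value $1$. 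So the first step is simply to establish this characterization of $\theta_{n,k}$.

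Next I would substitute this back into the right-hand side of (\ref{sum_a_nk}). Since $\theta_{n,k}$ annihilates every term whose index is not a multiple of $n$, the sum $\sum_{k=0}^{\infty}\theta_{n,k}\,a_{k}$ collapses to $\sum_{k:\,n\mid k}a_{k}$, and reindexing $k=nj$ for $j=0,1,2,\ldots$ turns this into $\sum_{j=0}^{\infty}a_{nj}$, which is exactly the left-hand side. That completes the identity at the level of formal manipulation of series.

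The only genuine subtlety — and the step I expect to be the main obstacle, or at least the one requiring a word of care — is the interchange of the finite sum over $m$ with the infinite sum over $k$ and the rearrangement of terms: this is legitimate provided $\sum_{k}a_{k}$ (equivalently each series $\sum_{k}\exp(2\pi i m k/n)\,a_{k}$) converges absolutely, which is the hypothesis implicitly in force wherever this lemma is applied (the $a_{k}$ here are terms of the absolutely convergent hypergeometric-type series appearing in (\ref{DaML_1/q}) and the analogous Mittag-Leffler expansions). Under absolute convergence the finite linear combination and the reindexing are both justified, so the proof is complete.
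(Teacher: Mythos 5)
Your proof is correct, and it is the standard argument: $\theta_{n,k}$ is the roots-of-unity filter, equal to $1$ when $n\mid k$ and $0$ otherwise by the finite geometric series, so the weighted series on the right of (\ref{sum_a_nk}) is just the subseries of the $a_k$ with $n\mid k$, i.e.\ the left-hand side. The paper states this lemma without giving any proof, so there is nothing to contrast with; your write-up supplies exactly the missing justification. One small refinement: because $\theta_{n,k}\in\{0,1\}$, the identity (\ref{sum_a_nk}) as stated needs no absolute convergence at all --- passing from $\sum_k a_{nk}$ to $\sum_k \theta_{n,k}a_k$ merely inserts zero terms, which never affects convergence or the value. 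Absolute convergence becomes relevant only at the next step, where the finite sum over $m$ is pulled outside the infinite sum over $k$ (as is done in the paper's application to $\partial\mathrm{E}_{\alpha,\beta}/\partial\beta$, where each of the $n$ series $\sum_k e^{2\pi i m k/n}a_k$ must converge); there the entire-function character of the Mittag-Leffler series guarantees this, so your caveat is well placed for the application even though it is not needed for the lemma itself.
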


\begin{theorem}
For $n=1,2,\ldots $, the following reduction formula holds true:\
\begin{equation}
\left. \frac{\partial \mathrm{E}_{\alpha ,\beta }\left( z\right) }{\partial
\beta }\right\vert _{\alpha =n}=-\frac{1}{n\,\Gamma \left( \beta \right) }%
\sum_{m=1}^{n}Q\left( \beta ,z^{1/n}e^{i2\pi m/n}\right) .
\label{DbML_n_resultado}
\end{equation}
\end{theorem}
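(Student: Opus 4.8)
The plan is to start from the series representation of the parameter derivative. Setting $\alpha=n$ in (\ref{DbML_series}) and using $\Gamma(nk+\beta)=\Gamma(\beta)\,(\beta)_{nk}$ from (\ref{Pochhammer_def}), we get
\[
\left.\frac{\partial \mathrm{E}_{\alpha,\beta}(z)}{\partial\beta}\right|_{\alpha=n}
=-\sum_{k=0}^{\infty}\frac{z^{k}\,\psi(nk+\beta)}{\Gamma(nk+\beta)}
=-\frac{1}{\Gamma(\beta)}\sum_{k=0}^{\infty}\frac{z^{k}\,\psi(nk+\beta)}{(\beta)_{nk}}.
\]
This last series converges for every $z\in\mathbb{C}$, since $(\beta)_{nk}$ grows super-exponentially while $\psi(nk+\beta)$ grows only logarithmically; there is therefore no convergence issue in what follows.

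Next I would recognize this series as the $n$-section of the series defining $Q(\beta,\cdot)$. Choose an $n$-th root $t$ of $z$, so that $z^{k}=t^{nk}$, and put $a_{j}=t^{j}\psi(j+\beta)/(\beta)_{j}$, which is precisely the $j$-th term of $Q(\beta,t)$ in (\ref{Q(t,a)_def}). Then $\sum_{k}z^{k}\psi(nk+\beta)/(\beta)_{nk}=\sum_{k}a_{nk}$, and Lemma \ref{Lemma_sum} turns this into $\sum_{k}\theta_{n,k}\,a_{k}$. Inserting the definition (\ref{theta_n,k_def}) of $\theta_{n,k}$ and exchanging the finite sum over $m$ with the sum over $k$ gives
\[
\sum_{k=0}^{\infty}a_{nk}
=\frac{1}{n}\sum_{m=1}^{n}\sum_{k=0}^{\infty}\frac{\bigl(t\,e^{2\pi i m/n}\bigr)^{k}\psi(k+\beta)}{(\beta)_{k}}
=\frac{1}{n}\sum_{m=1}^{n}Q\bigl(\beta,\,t\,e^{2\pi i m/n}\bigr).
\]
Since the numbers $t\,e^{2\pi i m/n}$, $m=1,\dots,n$, are exactly the $n$-th roots of $z$, the right-hand side does not depend on which root $t$ was chosen; writing $t=z^{1/n}$ and combining with the first display yields (\ref{DbML_n_resultado}).

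The only point that needs a word of care — and it is routine rather than a genuine obstacle — is the interchange of the two summations: the sum over $m$ is finite, and for each fixed $m$ the series in $k$ converges absolutely (again because $(\beta)_{k}$ grows like $k!$), so the swap is justified by Fubini. One should also recall that Lemma \ref{Lemma_sum} is just the roots-of-unity filter, with $\theta_{n,k}=1$ when $n\mid k$ and $\theta_{n,k}=0$ otherwise, which makes its term-by-term use legitimate. As a consistency check, the case $n=1$ reads $\partial_{\beta}\mathrm{E}_{1,\beta}(z)=-Q(\beta,z)/\Gamma(\beta)$, in agreement with (\ref{DbML_series}) and (\ref{Q(t,a)_def}).
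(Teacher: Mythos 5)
Your proposal is correct and follows essentially the same route as the paper: set $\alpha=n$ in (\ref{DbML_series}), apply the $n$-section Lemma \ref{Lemma_sum} (roots-of-unity filter) to the terms $a_{k}=z^{k/n}\psi(k+\beta)/\Gamma(k+\beta)$, and identify the resulting inner series with $Q(\beta,z^{1/n}e^{2\pi i m/n})$ via (\ref{Q(t,a)_def}). Your added remarks on absolute convergence, the interchange of sums, and independence of the chosen $n$-th root are welcome tightening of the same argument.
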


\begin{proof}
According to (\ref{DbML_series}) and Lemma \ref{Lemma_sum}, we have%
\begin{eqnarray*}
\left. \frac{\partial \mathrm{E}_{\alpha ,\beta }\left( z\right) }{\partial
\beta }\right\vert _{\alpha =n} &=&-\sum_{k=0}^{\infty }\frac{\,z^{k}\psi
\left( nk+\beta \right) }{\Gamma \left( nk+\beta \right) } \\
&=&-\sum_{k=0}^{\infty }\theta _{n,k}\,\frac{\,z^{k/n}\psi \left( k+\beta
\right) }{\Gamma \left( k+\beta \right) } \\
&=&-\frac{1}{n\,\Gamma \left( \beta \right) }\sum_{m=1}^{n}\exp \left( \frac{%
2\pi \,i\,m\,k}{n}\right) \sum_{k=0}^{\infty }\frac{\,z^{k/n}\psi \left(
k+\beta \right) }{\left( \beta \right) _{k}}.
\end{eqnarray*}%
Finally, take into account (\ref{Q(t,a)_def})\ to arrive at the desired
result.
\end{proof}

\begin{theorem}
For $n=1,2,\ldots $, the following reduction formula holds true:%
\begin{equation}
\left. \frac{\partial \mathrm{E}_{\alpha ,\beta }\left( z\right) }{\partial
\alpha }\right\vert _{\alpha =n}=-\frac{z^{1/n}}{n^{2}\,\Gamma \left( \beta
\right) }\sum_{m=1}^{n}e^{i2\pi m/n}P\left( \beta ,z^{1/n}e^{i2\pi
m/n}\right) .  \label{DaML_n_resultado}
\end{equation}
\end{theorem}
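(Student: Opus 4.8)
The plan is to mirror the proof of the preceding theorem, but starting from the series (\ref{DaML_series}) for the $\alpha$-derivative rather than the $\beta$-derivative. Setting $\alpha=n$ in (\ref{DaML_series}) gives
$$
\left.\frac{\partial \mathrm{E}_{\alpha,\beta}(z)}{\partial\alpha}\right\vert_{\alpha=n}=-\sum_{k=0}^{\infty}\frac{k\,z^{k}\,\psi(nk+\beta)}{\Gamma(nk+\beta)}.
$$
To put the index into the shape handled by Lemma \ref{Lemma_sum}, I would set $a_{k}=\dfrac{k\,z^{k/n}\,\psi(k+\beta)}{\Gamma(k+\beta)}$, so that $a_{nk}=\dfrac{nk\,z^{k}\,\psi(nk+\beta)}{\Gamma(nk+\beta)}$ and hence $\sum_{k\ge 0}\dfrac{k z^{k}\psi(nk+\beta)}{\Gamma(nk+\beta)}=\dfrac1n\sum_{k\ge 0}a_{nk}$. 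Note the $k=0$ term of $a_{nk}$ vanishes, so Lemma \ref{Lemma_sum} applies cleanly; applying (\ref{sum_a_nk}) and then expanding $\theta_{n,k}$ by (\ref{theta_n,k_def}) and interchanging the finite sum over $m$ with the sum over $k$ yields
$$
\left.\frac{\partial \mathrm{E}_{\alpha,\beta}(z)}{\partial\alpha}\right\vert_{\alpha=n}=-\frac{1}{n^{2}}\sum_{m=1}^{n}\sum_{k=0}^{\infty}\frac{k\,\bigl(z^{1/n}e^{i2\pi m/n}\bigr)^{k}\,\psi(k+\beta)}{\Gamma(k+\beta)}.
$$

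Next I would identify the inner sum with $P$. Using $\Gamma(k+\beta)=\Gamma(\beta)(\beta)_{k}$, the inner sum is $\frac1{\Gamma(\beta)}\sum_{k\ge 0}\frac{k\,t^{k}}{(\beta)_{k}}\psi(k+\beta)$ with $t=z^{1/n}e^{i2\pi m/n}$. Differentiating the defining series (\ref{Q(t,a)_def}) of $Q(a,t)$ term-by-term (legitimate inside its disc of convergence) and multiplying by $t$ gives $t\,\partial_{t}Q(a,t)=\sum_{k\ge 0}\frac{k\,t^{k}}{(a)_{k}}\psi(k+a)$, and $\partial_{t}Q(a,t)=P(a,t)$ by (\ref{P(t,a)_def}); hence the inner sum equals $\frac{t}{\Gamma(\beta)}P(\beta,t)$. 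Substituting $t=z^{1/n}e^{i2\pi m/n}$ and pulling out the common factor $z^{1/n}$ produces exactly (\ref{DaML_n_resultado}).

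A second, essentially equivalent route is to invoke the identity (\ref{DaML=zDz[DbML]}) together with the already-established (\ref{DbML_n_resultado}): applying $z\,\partial_{z}$ to $Q\bigl(\beta,z^{1/n}e^{i2\pi m/n}\bigr)$ and using the chain rule with $\partial t/\partial z=t/(nz)$ turns each summand into $\tfrac{t}{n}P(\beta,t)$, giving the same closed form. Either way, the routine calculations are short; the only points that need care are justifying the interchange of the finite $m$-sum with the infinite $k$-sum and the term-by-term differentiation of $Q$ (both valid on the common domain of convergence), and keeping track of the $z^{1/n}$ prefactor and the roots-of-unity phases $e^{i2\pi m/n}$ — the bookkeeping of those phases is where an index or sign slip is most likely, so that is the step I would double-check most carefully.
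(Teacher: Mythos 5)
Your proposal is correct, and in fact it contains two valid derivations. Your primary route — writing $\left.\partial_\alpha\mathrm{E}_{\alpha,\beta}\right|_{\alpha=n}$ from (\ref{DaML_series}), inserting the factor $n$ so the summand has the form $a_{nk}$, applying Lemma \ref{Lemma_sum} with the roots-of-unity weights $\theta_{n,k}$, and then recognizing the inner series as $t\,\partial_t Q(\beta,t)=t\,P(\beta,t)$ via (\ref{Q(t,a)_def}) and (\ref{P(t,a)_def}) — is a genuinely different, more self-contained argument than the paper's, which is simply to apply the identity (\ref{DaML=zDz[DbML]}) to the already-proved formula (\ref{DbML_n_resultado}) and use $P=\partial_t Q$ together with the chain rule $\partial t/\partial z=t/(nz)$; that shorter route is exactly your ``second, essentially equivalent route.'' Your direct route buys independence from the previous theorem (it only uses Lemma \ref{Lemma_sum} and the series for $Q$), at the cost of redoing the roots-of-unity bookkeeping, while the paper's route buys brevity by reusing (\ref{DbML_n_resultado}). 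The analytic caveats you flag are harmless here: since $(\beta)_k$ grows factorially, the series defining $Q(\beta,t)$ is entire in $t$, so the term-by-term differentiation and the interchange of the finite $m$-sum with the $k$-sum are automatic, and your phase and prefactor bookkeeping ($t_m=z^{1/n}e^{i2\pi m/n}$, overall factor $z^{1/n}/n^2$) checks out against (\ref{DaML_n_resultado}).
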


\begin{proof}
Apply (\ref{DaML=zDz[DbML]})\ to (\ref{DbML_n_resultado})\ and take into
account the definition given in (\ref{P(t,a)_def}).
\end{proof}

\begin{remark}
It is worth noting that for $\alpha =1$, (\ref{DaML_n_resultado})\ is
equivalent to (\ref{DaML_1/q}), and (\ref{DbML_n_resultado})\ is equivalent
to (\ref{DbML_1/q}).
\end{remark}

For particular values of $\alpha $ and $\beta $, the first derivative of the
Mittag-Leffler function with respect to the parameters are shown in Tables %
\ref{Table_DaMLn}\ and \ref{Table_DbMLn}, using the results given in (\ref%
{DbML_n_resultado})\ and (\ref{DaML_n_resultado}) with the aid of
MATHEMATICA program.

\begin{center}
%TCIMACRO{\TeXButton{B}{\begin{table}[tbp] \centering}}%
%BeginExpansion
\begin{table}[htbp] \centering%
%EndExpansion
\begin{tabular}{|c|c|c|}
\hline
$\alpha $ & $\beta $ & $\frac{\partial \mathrm{E}_{\alpha ,\beta }\left(
z\right) }{\partial \alpha }$ \\ \hline\hline
$1$ & $1$ & $1-e^{z}\left\{ z\left[ \ln z+\Gamma \left( 0,z\right) \right]
+1\right\} $ \\ \hline
$1$ & $2$ & $\frac{1}{z}\left\{ 1+\gamma -e^{z}\left[ 1+\left( z-1\right)
\left( \ln z+\Gamma \left( 0,z\right) \right) \right] \right\} $ \\ \hline
$2$ & $1$ & $\frac{1}{8}e^{-\sqrt{z}}\left\{ \sqrt{z}\left[ \ln z-2\,\mathrm{%
Ei}\left( \sqrt{z}\right) -e^{2\sqrt{z}}\left( 2\,\mathrm{E}_{1}\left( \sqrt{%
z}\right) +\ln z\right) \right] -2\left( e^{\sqrt{z}}-1\right) ^{2}\right\} $
\\ \hline
$2$ & $2$ & $\frac{1}{8\sqrt{z}}e^{-\sqrt{z}}\left\{ e^{2\sqrt{z}}\left[
\left( 1-\sqrt{z}\right) \left( 2\,\mathrm{E}_{1}\left( \sqrt{z}\right) +\ln
z\right) -2\right] +\left( 1+\sqrt{z}\right) \left[ 2\,\mathrm{Ei}\left(
\sqrt{z}\right) -\ln z\right] +2\right\} $ \\ \hline
\end{tabular}%
\caption{First derivative of the Mittag-Leffler function with respect to
$\alpha$.}\label{Table_DaMLn}%
%TCIMACRO{\TeXButton{E}{\end{table}}}%
%BeginExpansion
\end{table}%
%EndExpansion
\end{center}

\bigskip

\begin{center}
%TCIMACRO{\TeXButton{B}{\begin{table}[tbp] \centering}}%
%BeginExpansion
\begin{table}[htbp] \centering%
%EndExpansion
\begin{tabular}{|c|c|c|}
\hline
$\alpha $ & $\beta $ & $\frac{\partial \mathrm{E}_{\alpha ,\beta }\left(
z\right) }{\partial \beta }$ \\ \hline\hline
$1$ & $1$ & $-e^{z}\left[ \ln z+\Gamma \left( 0,z\right) \right] $ \\ \hline
$1$ & $2$ & $-\frac{1}{z}\left\{ e^{z}\left[ \ln z+\Gamma \left( 0,z\right) %
\right] +\gamma \right\} $ \\ \hline
$2$ & $1$ & $\frac{1}{4}e^{-\sqrt{z}}\left\{ 2\,\mathrm{Ei}\left( \sqrt{z}%
\right) -\ln z-e^{2\sqrt{z}}\left[ \ln z+2\,\Gamma \left( 0,\sqrt{z}\right) %
\right] \right\} $ \\ \hline
$2$ & $2$ & $\frac{1}{4\sqrt{z}}e^{-\sqrt{z}}\left\{ \ln z-2\,\mathrm{Ei}%
\left( \sqrt{z}\right) -e^{2\sqrt{z}}\left[ \ln z+2\,\Gamma \left( 0,\sqrt{z}%
\right) \right] \right\} $ \\ \hline
\end{tabular}%
\caption{First derivative of the Mittag-Leffler function with respect to
$\beta$.}\label{Table_DbMLn}%
%TCIMACRO{\TeXButton{E}{\end{table}}}%
%BeginExpansion
\end{table}%
%EndExpansion
\end{center}

\section{Conclusions \label{Section: Conclusions}}

We have calculated some new infinite sums involving the digamma function. On
the one hand, some of these new sums are connected to the incomplete beta
function, i.e. Eqns. (\ref{Reduction_b_c}) and (\ref{Reduction_a_b_b+1}).
For this purpose, we have derived a new $_{3}F_{2}$ hypergeometric sum at
argument unity in (\ref{3F2(1)}). Also, we have calculated in (\ref%
{DaB_resultado})\ and (\ref{DbB_resultado})\ new expressions for the
derivatives of the incomplete beta function $\mathrm{B}_{z}\left( a,b\right)
$ with respect to the parameters $a$ an $b$. As a consequence of the latter,
we have obtained in (\ref{Integral_Lerch_resultado})\ a definite integral
which does not seem to be tabulated in the most common literature. Also, in (%
\ref{3F2_reduction})\ we have derived a new reduction formula for a $%
_{3}F_{2}$ hypergeometric function.

On the other hand, we have calculated sums involving the digamma function
which are connected to the Bessel functions, i.e. Eqns (\ref{Sum_J_resultado}%
), (\ref{Sum_I_resultado}), (\ref{Sum_J+I})\ and (\ref{Sum_J-I}). For this
purpose, we have used the derivative of the Pochhammer symbol given in (\ref%
{D[1/(x)_n]}), as well as some expressions found in the literature for the
order derivatives of $J_{\nu }\left( z\right) $ and $I_{\nu }\left( z\right)
$, given in (\ref{DJnu_Meijer})\ and (\ref{DInu_Meijer})\ respectively.

Finally, we have calculated some reduction formulas for the derivatives of
some special functions with respect to the parameters as an application of
the sums involving the digamma function. In particular, we have applied the
sum presented in (\ref{Sum_I_resultado})\ to the calculation of the
reduction formulas (\ref{DbWa1b})\ and (\ref{DaWa1b})\ for the derivatives
of the Wright function with respect to the parameters. Similarly, applying
the sum given in (\ref{Q(t,a)_def}), we have calculated the reduction
formulas (\ref{DbML_n_resultado})\ and (\ref{DaML_n_resultado})\ for the
derivatives of the Mittag-Leffler function with respect to the parameters.

%\bibliographystyle{plain}
%\bibliography{My_bib_New_sums_Psi_B}

\end{document}